\newtheorem{lemma}{Lemma}[section]
\newtheorem{theorem}{Theorem}[section]
\newtheorem{definition}{Definition}[section]
\journal{}
\begin{document}

\begin{frontmatter}

%% Title, authors and addresses

%% use the tnoteref command within \title for footnotes;
%% use the tnotetext command for theassociated footnote;
%% use the fnref command within \author or \address for footnotes;
%% use the fntext command for theassociated footnote;
%% use the corref command within \author for corresponding author footnotes;
%% use the cortext command for theassociated footnote;
%% use the ead command for the email address,
%% and the form \ead[url] for the home page:
%% \title{Title\tnoteref{label1}}
%% \tnotetext[label1]{}
%% \author{Name\corref{cor1}\fnref{label2}}
%% \ead{email address}
%% \ead[url]{home page}
%% \fntext[label2]{}
%% \cortext[cor1]{}
%% \address{Address\fnref{label3}}
%% \fntext[label3]{}

\title{Diffusion equations with general nonlocal time and space derivatives}

%% use optional labels to link authors explicitly to addresses:
%% \author[label1,label2]{}
%% \address[label1]{}
%% \address[label2]{}

\author[Firstaddress]{Chung-Sik Sin \corref{mycorrespondingauthor}}
\cortext[mycorrespondingauthor]{Corresponding author}
\ead{cs.sin@ryongnamsan.edu.kp}

\address[Firstaddress]{Faculty of Mathematics, \textit{\textbf {Kim Il Sung}} University, Ryomyong District, Pyongyang, Democratic People's Republic of Korea}

\author[Firstaddress]{Hyong-Chol O}

\author[Firstaddress]{Sang-Mun Kim}

\begin{abstract}
In the present study, firstly, based on the continuous time random walk (CTRW) theory, general diffusion equations are derived.
The time derivative is taken as the general Caputo-type derivative introduced by Kochubei and
the spatial derivative is  the general Laplacian defined by removing the conditions (1.5) and  (1.6)  from the definition of the general fractional Laplacian proposed in the paper (Servadei and Valdinoci, 2012).
Secondly, the existence of solutions of the Cauchy problem for the general diffusion equation is proved by extending the domain of the general Laplacian to a general Sobolev space. The results for positivity and boundedness of the solutions are also obtained.
In the last, the existence result for solutions of the initial boundary value problem (IBVP) for the general diffusion equation on a bounded domain is established by using the Friedrichs extension of the general fractional Laplacian introduced in the book (Bisci, Radulescu and Servadei, 2016).
\end{abstract}

\begin{keyword}
general Caputo-type derivative, general  Laplacian, anomalous diffusion equation, continuous time random walk theory, general Sobolev space, existence of solution,
Friedrichs extension.
\end{keyword}

\end{frontmatter}

%% \linenumbers

%% main text
\section{Introduction}
\label{Section 1}
Anomalous diffusion phenomena have been observed in many physical processes such as diffusion processes inside living biological cells \cite{Barkai,Hofling,Sokolov}, motion of charge carrier in amorphous semiconductors \cite{Scher} and motion of suspended particle in ground water \cite{Benson}.
In particular, with the help of modern single particle superresolution techniques, motion of anomalously diffusing particle in complex crowded environment has been intensively investigated \cite {Norregaard,Eggeling,Manzo,Honigmann}. Recently, in \cite{Alexander}, anomalous diffusion processes were successfully simulated by Monte Carlo method.

It is a very interesting matter that the anomalous diffusion process can be described by means of a CTRW model.  
The probability density function (PDF)  of the CTRW process is derived from the waiting time PDF  and the jump length PDF \cite{Montroll}.
When the waiting time PDF $ \phi(t)$ of moving particle follows the relation: $ \phi(t)\approx t^{-1-\alpha}, t \rightarrow \infty $ for some $ \alpha \in (0,1) $, then the random walk process, whose mean squared displacement (MSD) is $  <x^2(t)> \approx t^\alpha, t \rightarrow \infty $, corresponds to the Caputo/Riemann-Liouville time fractional diffusion equation \cite{Metzler}. If the jump length PDF $ \lambda (x)$ satisfies the relation: $ \lambda (x)\approx |x|^{-1-\beta}, |x| \rightarrow \infty $ for some $ \beta \in (0,2) $, then  the L\'{e}vy process, whose MSD diverges, is modeled by the Riesz space fractional diffusion equation \cite{Metzler}.
However, classical fractional derivatives such as Caputo, Riemann-Liouville and Riesz fractional derivatives are insufficient to capture all of the anomalous diffusion phenomena. 
In fact, in \cite{Lomholt,Sanders}, the authors found the ultraslow diffusive processes, whose MSDs don't follow the power law in time but have logarithmic dependence.   
In \cite{Chechkin-Sokolov}, the authors used the fractional derivative of distributed order, which generalizes  single-term and multi-term fractional derivatives, to model the ultraslow diffusion process effectively. 

The diversity of the anomalous diffusion process needs a more general framework for modeling the complex diffusion phenomena.  Kochubei introduced a general derivative which generalizes the previous fractional derivatives such as  single-term, multi-term and distributed order Caputo-type derivatives \cite{Kochubei}.
It has been shown in the papers \cite{Sandev_FCAA,Sandev_Chaos} that the general derivative is very appropriate as time derivative in describing the anomalous diffusion process. On the other hand, in order to define a general space derivative,  Servadei and Valdinoci proposed a generalization of the fractional Laplacian and considered an equation involving the generalized operator \cite{Servadei_Mountain}. In \cite{Bisci}, the auhors investigated nonlocal elliptic equations involving the operator defined by removing a condition for the kernel function from the definition of the generalization of the fractional Laplacian introduced in \cite{Servadei_Mountain}.
However, the MSDs of the random walk processes described by the anomalous diffusion equations involving the generalizations of the fractional Laplacian are infinite. The most important problem which affects the applicability of L\'{e}vy statistics is the divergence of the MSD due to slowly  decaying tails \cite{del-Castillo}. In case of the exponentially tempered L\'{e}vy flights, which have been found in plasma physics and fluid mechanics, the divergence problem has been solved in \cite{del-Castillo}.

With the rapid development of mathematical modeling of anomalous diffusion processes, mathematical aspects for diffusion equations involving nonlocal derivatives have been widely investigated.
Eidelman and Kochubei studied the fundamental solution of the Cauchy problem of the Caputo-type time fractional diffusion equation by employing the H-functions  \cite{Eidelman}. 
In \cite{Luchko_2009}, a maximum principle of the IBVP of the single-term Caputo-type time fractional diffusion equation was proved.
In \cite{Sakamoto}, the authors used the eigenfunction expansion of the symmetric uniformly elliptic operator and the properties of the two-parameter Mittag-Leffler function to establish the existence of weak solutions of the IBVP of the single-term time fractional diffusion equation.
In \cite{GorLuc}, existence results for a suitably defined weak solution of the IBVP of the single-term time fractional diffusion equation were obtained in the fractional Sobolev space. 
In \cite{Luchko_multi_term,Li_Liu_Yamamoto}, existence and uniqueness of solutions of the multi-term Caputo-type time fractional diffusion equation were proved  by deriving the maximum principle for the IBVP and the new properties of the multivariate Mittag-Leffler function. 
In \cite{Li_2017,Rundell_Zhang}, the authors considered the distributed order time fractional diffusion equations.
In \cite{Mainardi}, the fundamental solution of the Cauchy problem for the fractional diffusion
equation with the Caputo fractional  derivative in  time and the Riesz-Feller derivative in  space was investigated.
In \cite{Luchko-Yamamoto}, existence and uniqueness of solutions of the IBVP for the fractional diffusion equation with the general Caputo-type derivative in time were established. 

In this paper, we try to develop a more general framework for describing more complex anomalous diffusion phenomena. 
By using the CTRW theory, we obtain a general diffusion equation involving the general Caputo-type differential operator and a nonlocal operator more general than the general fractional Laplacians proposed in \cite{Servadei_Mountain, Bisci}.
The advantage of the general diffusion equation  is that it can capture the L\'{e}vy process whose MSD is finite. 

The organization of the rest paper is as follows. In Section 2,  the general nonlocal derivatives are discussed  and the general nonlocal diffusion equations are derived from the CTRW theory.
In Section 3, we establish the existence, positivity and boundedness of solutions of the Cauchy problem for the general diffusion equation. For special cases of the general diffusion equation, the analytical solutions are represented in terms of Mittag-Leffler type functions.
In Section 4, we obtain the existence result for the IBVP of the general diffusion equation on a bounded domain.

\section{General diffusion equation derived from CTRW theory}
\label{Section 2}
\setcounter{section}{2}
\setcounter{equation}{0}\setcounter{theorem}{0}
In this section, we recall the concept of the general Caputo-type differential operator and propose the general Laplacian  including the generalizations of the fractional Laplacian introduced in \cite{Servadei_Mountain, Bisci}. Then, by means of the CTRW theory,  the general diffusion equation involving the general Caputo-type derivative in time and the general Laplacian in space is obtained.
\subsection{General Caputo-type derivative}
\begin{definition} [\cite{Kochubei}]
The general Caputo-type differential operator $ \mathscr{D}_{(g)} $ is defined by
\begin{equation}
\nonumber
\mathscr{D}_{(g)} v(t)=\frac{d}{dt}\int_0^t g(t-\tau)v(\tau)d\tau-g(t)v(0).
\end{equation}
\end{definition}
Throughout this paper, we assume that the kernel function $ g $ satisfies the following conditions:\\
(C1) The Laplace transform $ \Lambda g $ of $ g $
\begin{equation}
\nonumber
\Lambda g(s)=\hat g(s)=\int_0^\infty e^{-ts} g(t) dt
\end{equation}
 exists for all $ s>0 $, \\
(C2) $ \hat g(s) $ is a Stieltjes function,\\
(C3) $ \hat g(s) \rightarrow 0 $ and $ s \hat g(s) \rightarrow \infty $ as $ s \rightarrow \infty $,\\
(C4) $ \hat g(s) \rightarrow \infty $ and $ s \hat g(s) \rightarrow 0 $ as $ s \rightarrow 0 $.

Here the function $ v $ is said to be a Stieltjes function, if there exist $ a,b \geq 0 $ and a Borel measure $ \sigma $ on $ [0,\infty) $ such that 
\begin{equation}
v(t)=\frac{a}{t}+b+\int_0^{\infty} \frac{1}{s+t} \sigma (ds),
\end{equation}
where
\begin{equation}
\nonumber
\int_0^\infty (1+t)^{-1} \sigma (dt)<\infty.
\end{equation}

As a typical example of the general Caputo-type derivative, when $ g(t)=\frac{t^{-\alpha}}{\Gamma(1-\alpha)} $ for some $ 0<\alpha<1 $, we obtain $\mathscr{D}_{(g)} v(t)=D^\alpha v(t) $, where $ D^\alpha $ means the Caputo derivative.  For details of the Caputo derivative, see \cite{DieBoo}.

It was proved in \cite{Kochubei} that the kernel function $ g $ is completely monotone if $ g $ satisfies the conditions (C1)-(C4).
Here the function $ g $ is said to be a completely monotone function, if $ g\in C^\infty(0,\infty) $ and  $ (-1)^j g^{(j)}(t)\geq 0 $ for $ t>0, j=0,1,2,...$.
For details of completely monotone functions and Stieltjes functions, see \cite{Schilling}.

Meanwhile, by the results in \cite{Kochubei,Sin}, the following property is obtained.
\begin{lemma}
\label{property_of_general_Caputo}
 Let $ \lambda>0 $. Then the problem
\begin{align}
\label{homogenous_linear_differential_equation}
\mathscr{D}_{(g)}v(t)&=-\lambda v(t), t>0,\\
\label{1_initial_condition}
&v(0)=1
\end{align}
has a unique solution in $ C[0,\infty)  $. In particular, the solution is a completely monotone function on $ (0,\infty) $.
\end{lemma}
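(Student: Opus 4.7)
The plan is to pass to the Laplace transform, derive an explicit formula for $\hat v$, and invoke the Stieltjes-Bernstein calculus. A direct computation from the definition of $\mathscr{D}_{(g)}$ gives $\mathcal{L}[\mathscr{D}_{(g)} v](s) = s\hat g(s)\hat v(s) - \hat g(s)v(0)$. Combined with (\ref{homogenous_linear_differential_equation})--(\ref{1_initial_condition}), this would yield algebraically
\begin{equation}
\nonumber
\hat v(s) = \frac{\hat g(s)}{s\hat g(s) + \lambda} = \frac{1}{s + \lambda/\hat g(s)}.
\end{equation}

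Next I would show that this function of $s$ lies in the Stieltjes cone and that its representing measure has no singular parts, so that its Laplace inverse is a bona fide completely monotone function. By (C2), $\hat g$ is a nonzero Stieltjes function on $(0,\infty)$, hence $\lambda/\hat g$ is a complete Bernstein function; adding the identity $s$ preserves the complete Bernstein class, and taking the reciprocal returns a Stieltjes function (these closure properties are standard, see \cite{Schilling}). Thus $\hat v(s) = a/s + b + \int_0^\infty (t+s)^{-1}\sigma(dt)$ for some $a,b\geq 0$ and Borel measure $\sigma$. Conditions (C3) and (C4) then force $a = b = 0$: as $s \to \infty$ the relation $\hat v(s) \sim 1/s$ (using $s\hat g(s)\to\infty$) eliminates $b$, and as $s \to 0$ the identity $s\hat v(s) = s\hat g(s)/(s\hat g(s)+\lambda)\to 0$ by (C4) eliminates $a$. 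Consequently $v(t) = \int_0^\infty e^{-tx}\sigma(dx)$ is completely monotone on $(0,\infty)$.

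The continuity of $v$ on $[0,\infty)$ together with $v(0)=1$ would then follow from the initial-value theorem: $\lim_{s\to\infty} s\hat v(s) = \lim_{s\to\infty} s\hat g(s)/(s\hat g(s)+\lambda) = 1$ by (C3), so $\sigma$ has total mass one and $v$ extends continuously to $[0,\infty)$ with $v(0) = 1$. Uniqueness in $C[0,\infty)$ is immediate from the injectivity of the Laplace transform on locally bounded continuous functions: any continuous solution is Laplace-transformable on $(0,\infty)$ and is forced to satisfy the same algebraic identity for $\hat v$, hence must coincide with the function constructed above.

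The main obstacle I anticipate is the Stieltjes verification for $\hat v$, which rests on the nontrivial closure properties of the Bernstein and Stieltjes cones and on the careful matching of asymptotics at $s\to 0$ and $s\to\infty$ needed to exclude singular contributions from the representing measure. In essence, the bulk of the lemma is a corollary of the subordination framework developed in \cite{Kochubei} and refined in \cite{Sin}, so the plan reduces to citing the relevant Stieltjes/complete Bernstein identities and assembling the four pieces above (existence, continuity at the origin, the initial value, and uniqueness).
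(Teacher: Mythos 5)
The paper itself offers no proof of this lemma: it is imported wholesale from \cite{Kochubei} and \cite{Sin}, and your Laplace-transform/Stieltjes route is precisely the argument of those references. The existence and complete-monotonicity parts of your proposal are sound: $\hat v(s)=\hat g(s)/(s\hat g(s)+\lambda)=1/(s+\lambda/\hat g(s))$ is a Stieltjes function by the closure properties of the complete Bernstein cone (see \cite{Schilling}), the asymptotics forced by (C3)--(C4) kill the $a/s$ and $b$ terms in the representation, and $\lim_{s\to\infty}s\hat v(s)=1$ gives total mass one and $v(0+)=1$. One point you gloss over: this derivation only shows that \emph{if} a transformable solution exists, its transform must be $\hat g/(s\hat g+\lambda)$. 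To finish the existence half you must check that the constructed inverse transform actually satisfies the equation pointwise; this follows by noting $\hat g(s)\hat v(s)=s^{-1}\big(\hat g(s)-\lambda\hat v(s)\big)$, hence $(g*v)(t)=\int_0^t\big(g(s)-\lambda v(s)\big)\,ds$, which is differentiable on $(0,\infty)$ with derivative $g-\lambda v$, so that $\mathscr{D}_{(g)}v=-\lambda v$. Routine, but not automatic.

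The genuine gap is in the uniqueness step. You assert that ``any continuous solution is Laplace-transformable on $(0,\infty)$,'' but a function in $C[0,\infty)$ need not admit a Laplace transform (consider $e^{t^2}$), and nothing in the equation has yet been shown to exclude such growth; injectivity of the Laplace transform cannot be invoked on a class where the transform is not known to exist. A correct repair: if $v_1,v_2\in C[0,\infty)$ both solve the problem, set $w=v_1-v_2$, so $w(0)=0$ and integration of the equation (using $g\in L^1_{\mathrm{loc}}$, which follows from (C1)) gives $\int_0^t\big(g(t-s)+\lambda\big)w(s)\,ds=0$ for all $t\ge 0$. Since the kernel $g+\lambda$ is strictly positive near the origin, Titchmarsh's convolution theorem applied on each $[0,T]$ forces $w\equiv 0$; alternatively one can invoke the Volterra resolvent-kernel argument actually used in \cite{Kochubei,Sin}. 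With that substitution your proof is complete and coincides in substance with the cited one.
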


\subsection{General Laplacian}
\label{subsection_general_fractional_Laplacian}
\begin{definition}
\label{definition_general_laplacian}
Let the function $ k:\mathbb{R}^n \rightarrow (0,\infty) $ satisfy the condition: $ \min\{|x|^2,1\}k(x) \in L^1(\mathbb{R}^n) $.
Then the general  Laplacian $ \mathscr{L}_{(k)} $ is defined by
\begin{equation}
\label{general_laplacian}
\mathscr{L}_{(k)}v(x)=\int_{\mathbb{R}^n} \big(v(x+y)+v(x-y)-2v(x)\big)k(y)dy,  x\in \mathbb{R}^n.
\end{equation}
\end{definition}

For example, when $ k(x)=|x|^{-(n+2\beta)} $ for some $ 0<\beta< 1 $,  we obtain $\mathscr{L}_{(k)}=-c(-\triangle)^\beta$, where $(-\triangle)^\beta$ denotes the fractional Laplacian and $ c>0 $ means the normalization constant. The fractional Laplacian is discussed in detail in the reference \cite{Landko}. 

We remark that Definition \ref{definition_general_laplacian} is obtained by eliminating, from the concept of the generalization of the fractional Laplacian introduced in \cite[subsection 1.3.3]{Bisci},  the condition \\
(C5) There exist $ \theta>0, 0<\beta< 1 $ such that $ k(x)\geq \theta |x|^{-(n+2\beta)} $ for $ x \in \mathbb{R}^n \setminus \{0\} $.\\
Moreover, Definition \ref{definition_general_laplacian} is given by removing, from the concept of the general fractional Laplacian proposed by Servadei and Valdinoci in \cite{Servadei_Mountain},  the condition (C5) and the other condition: $k(x)=k(-x) \text{ for any } x\in \mathbb{R}^n \setminus \{0\} $. \\

An example of the general Laplacian  $ \mathscr{L}_{(k)} $ which doesn't satisfy the condition (C5) is one whose kernel function is of the exponentially truncated form
\begin{equation}
\label{typical_general_exam}
k(x)=q|x|^{-(1+2\beta)}e^{-h|x|}  \text{ for } x \in \mathbb{R},  q>0,  0<\beta < 1 \text{ and } h > 0. 
\end{equation}

In the following lemma, an alternative definition of the general Laplacian  $ \mathscr{L}_{(k)} $ is introduced
by using the Fourier transform. In other words, the general Laplacian  $ \mathscr{L}_{(k)} $ can be written as a pseudodifferential operator.
\begin{lemma}
\label{property_of_general_Lapalcian}
Let the function $ k:\mathbb{R}^n \rightarrow (0,\infty) $ satisfy the condition:
\begin{equation}
\nonumber
\min\{|x|^2,1\}k(x) \in L^1(\mathbb{R}^n) 
\end{equation}
 and the function $ \zeta: \mathbb{R}^n \setminus \{0\} \rightarrow (0, \infty) $ be defined by
\begin{equation}
\label{zeta}
\zeta(x)=2\int_{\mathbb{R}^n} (1-\cos(xy))k(y)dy, x\in \mathbb{R}^n.
\end{equation}
Then the following relation holds.
\begin{equation}
\nonumber
\mathscr{L}_{(k)}v(x)=-F^{-1}(\zeta(\xi)Fv(\xi))(x), v\in S(\mathbb{R}^n), x\in \mathbb{R}^n,
\end{equation}
where $ F, F^{-1} $ are respectively Fourier transform and inverse Fourier transform defined by
\begin{align}
\nonumber
&Fv(\xi)=\tilde{v}(\xi)=\frac{1}{(2\pi)^\frac{n}{2}}\int_{\mathbb{R}^n} v(x) e^{-ix\xi} dx,\\
\nonumber
&F^{-1}v(x)=\frac{1}{(2\pi)^\frac{n}{2}} \int_{\mathbb{R}^n} v(\xi) e^{ix\xi} d\xi.
\end{align}
Here $ S(\mathbb{R}^n) $ means the Schwartz space of rapidly decreasing smooth functions $ v $ satisfying 
\begin{equation}
\nonumber
 \sup_{x=(x_1,...x_n) \in \mathbb{R}^n} \bigg|x_1^{i_1}\cdots x_n^{i_n}\frac{\partial^{j_1}v(x)}{\partial x_1^{j_1}}\cdots\frac{\partial^{j_n}v(x)}{\partial x_n^{j_n}}\bigg|<+\infty 
\end{equation} 
for any $ i_1,...,i_n, j_1,...,j_n \in \mathbb{N} \cup \{0\}$.
\end{lemma}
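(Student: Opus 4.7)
The plan is to compute $\mathscr{L}_{(k)}v(x)$ by substituting the Fourier inversion representation of $v$ at the points $x+y$, $x-y$, and $x$, then interchanging the order of integration to bring the $y$–integral inside. The central identity that makes the proof work is
\begin{equation}
\nonumber
e^{i(x+y)\xi}+e^{i(x-y)\xi}-2e^{ix\xi}=e^{ix\xi}\bigl(e^{iy\xi}+e^{-iy\xi}-2\bigr)=-2\bigl(1-\cos(y\xi)\bigr)e^{ix\xi},
\end{equation}
which turns the symmetric second difference into exactly the integrand that defines $\zeta(\xi)$.

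First, I would verify that $\mathscr{L}_{(k)}v(x)$ is well defined for $v\in S(\mathbb{R}^n)$. Using the estimate $|v(x+y)+v(x-y)-2v(x)|\leq \|D^{2}v\|_{\infty}|y|^{2}$ near $y=0$ and $\leq 4\|v\|_{\infty}$ elsewhere, the hypothesis $\min\{|y|^{2},1\}k(y)\in L^{1}(\mathbb{R}^n)$ yields absolute convergence. The same split, applied to $1-\cos(y\xi)\leq\tfrac{1}{2}|y|^{2}|\xi|^{2}$ for small $y$ and $\leq 2$ for large $y$, shows that $\zeta(\xi)$ is finite for every $\xi$ and satisfies the polynomial bound $\zeta(\xi)\leq C(1+|\xi|^{2})$.

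Next, I would write, using Fourier inversion for $v\in S(\mathbb{R}^n)$,
\begin{equation}
\nonumber
v(x+y)+v(x-y)-2v(x)=-\frac{2}{(2\pi)^{n/2}}\int_{\mathbb{R}^n}Fv(\xi)\,e^{ix\xi}\bigl(1-\cos(y\xi)\bigr)\,d\xi,
\end{equation}
multiply by $k(y)$, and integrate in $y$. To swap the order of integration via Fubini, I need absolute integrability of the double integrand, namely of $|Fv(\xi)|\,(1-\cos(y\xi))\,k(y)$. Bounding $1-\cos(y\xi)\leq C\max(|\xi|^{2},1)\min(|y|^{2},1)$, the integral in $y$ is controlled by $\int\min(|y|^{2},1)k(y)\,dy<\infty$, while the remaining factor $(1+|\xi|^{2})|Fv(\xi)|$ is integrable because $Fv\in S(\mathbb{R}^n)$. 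After interchanging, the inner $y$–integral produces exactly $\zeta(\xi)$, and one reads off
\begin{equation}
\nonumber
\mathscr{L}_{(k)}v(x)=-\frac{1}{(2\pi)^{n/2}}\int_{\mathbb{R}^n}\zeta(\xi)\,Fv(\xi)\,e^{ix\xi}\,d\xi=-F^{-1}\bigl(\zeta(\xi)Fv(\xi)\bigr)(x).
\end{equation}

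The main technical obstacle is the Fubini justification, since $k$ is allowed to be singular at the origin with only the minimal integrability assumption and $\zeta$ may grow like $|\xi|^{2}$; everything hinges on marrying the quadratic vanishing of $1-\cos(y\xi)$ at $y=0$ with the quadratic vanishing built into the $k$–integrability hypothesis, and on using the Schwartz decay of $Fv$ to absorb the resulting $|\xi|^{2}$ growth. Once this bookkeeping is in place, the rest is essentially the identity $e^{iy\xi}+e^{-iy\xi}=2\cos(y\xi)$.
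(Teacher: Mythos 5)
Your proof is correct and follows essentially the same route as the paper's: both hinge on the identity $e^{iy\xi}+e^{-iy\xi}-2=-2\bigl(1-\cos(y\xi)\bigr)$ together with an interchange of the $y$-integral against $k$ with a Fourier integral — the paper applies $F$ to $\mathscr{L}_{(k)}v$ and uses the translation property, while you substitute the Fourier inversion of $v$ into the operator, which is the same computation read in the opposite direction. If anything, your writeup is more complete, since you explicitly justify the Fubini step via the bound $1-\cos(y\xi)\le C\max\{|\xi|^{2},1\}\min\{|y|^{2},1\}$ and the integrability of $(1+|\xi|^{2})|Fv(\xi)|$, a verification the paper leaves implicit.
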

\begin{proof}
For $ v\in S(\mathbb{R}^n) $, we have
\begin{equation}
\nonumber
\big(v(x+y)+v(x-y)-2v(x)\big)k(y)\leq \|D^2v\|_{L^{\infty}(\mathbb{R}^n)}|y|^2k(y).
\end{equation}
Thus $\mathscr{L}_{(k)}v$ exists.
For $ v\in S(\mathbb{R}^n) $ and $ \xi \in \mathbb{R}^n $, we have 
\begin{align}
\nonumber
F\{\mathscr{L}_{(k)}v\}(\xi)&=\int_{\mathbb{R}^n} k(y) F\big(v(x+y)+v(x-y)-2v(x)\big)(\xi)dy\\
\nonumber
&=\int_{\mathbb{R}^n} k(y) \big(e^{i\xi y}+e^{-i\xi y}-2\big)dy Fv(\xi)\\
\nonumber
&=-2\int_{\mathbb{R}^n} k(y) \big(1-\cos(\xi y)\big)dy Fv(\xi)\\
\nonumber
&=-\zeta(\xi)Fv(\xi).
\end{align}
\end{proof}
Throughout this paper, we assume that the kernel function $ k $ of the general Laplacian $\mathscr{L}_{(k)}$ satisfies the following condition.\\
(C6) $ \zeta $ defined by (\ref{zeta}) is negative definite.\\
Here we say that the function $ v: \mathbb{R}^n \rightarrow \mathbb{C} $ is negative definite, if $ v(x)=v(-x) $ for $ x \in \mathbb{R}^n $ and the relation
\begin{equation}
\nonumber
\sum_{j,k=1}^m (v(x_j)+\overline{v(x_k)}-v(x_j-x_k))c_j\overline{c}_k\geq 0
\end{equation} 
holds for $ m\in \mathbb{N}, x_1,...,x_m \in \mathbb{R}^n $ and $ c_1,...,c_m \in \mathbb{C} $. 
If  the relation
\begin{equation}
\nonumber
\sum_{j,k=1}^m v(x_j-x_k)c_j\overline{c}_k\geq 0
\end{equation} 
holds for $ m\in \mathbb{N}, x_1,...,x_m \in \mathbb{R}^n $ and $ c_1,...,c_m \in \mathbb{C} $,
we say that the function $ v:\mathbb{R}^n \rightarrow \mathbb{C}$ is positive definite.
It follows from Theorem 13.14 in \cite{Schilling} that the following condition is a sufficent condition for $ \zeta $ to be negative definite:\\
there exists a Bernstein function $  f:(0, \infty) \rightarrow [0, \infty) $ such that for  $ x \in \mathbb{R}^n $, the relation $ f(|x|^2)=\zeta(x) $ holds. \\
L\'{e}vy-Khintchine's Theorem \cite[Theorem 4.15]{Schilling} gives a representation for the continuous negative definite function.
We can see that the examples of kernels of $\mathscr{L}_{(k)}$ mentioned in the above satisfy the condition (C6).

\subsection{General diffusion equation}
The probability density function (PDF) of an uncoupled CTRW process in Fourier-Laplace space is written by \cite{Metzler}
\begin{equation}
\label{pdf_fourier_laplace_space}
\hat{\tilde{p}}(s,\xi)=\frac{1-\hat{\phi}(s)}{s}\frac{1}{1-\hat{\phi}(s)\tilde{w}(\xi)},
\end{equation}
where $\hat{\tilde{p}}(s,\xi)$ stands for the Fourier-Laplace transform of the PDF $ p(t,x) $ to find a walker at position $ x $ at time $ t $, $\hat{\phi}(s) $  means the Laplace transform of the waiting time PDF $ \phi(t) $ and
$ \tilde{w}(\xi) $ denotes the Fourier transform of the jump length PDF $ w(x) $.

Set
\begin{align}
\label{time_pdf}
\hat{\phi}(s)=\frac{1}{1+s\hat{g}(s)},\\
\label{length_pdf}
 \tilde{w}(\xi)=e^{-\zeta(\xi)},
\end{align}
where $ g:\mathbb{R} \rightarrow \mathbb{R} $ is a function satisfying the conditions (C1)-(C4) and $ \zeta:\mathbb{R}^n \rightarrow \mathbb{R}  $ is a function satisfying the relation (C6).
In order to guarantee  that a function is a PDF, its Laplace transform should be completely monotone \cite{Feller}.
Since $ g $ satisfies the conditions (C1) and (C2), using the properties of the Stieltjes function, we can easily see that 
$\hat{\phi}(s)$ is a Stieltjes function.
Meanwhile, if  $\phi(t)$ is a PDF, then the following relations 
\begin{equation}
\label{requirement_2}
1=\int_0^\infty \phi(t) dt=\lim_{s \to 0} \hat{\phi}(s)
\end{equation}
and
\begin{equation}
\label{requirement_3}
0=\lim_{s \to \infty} \int_0^\infty e^{-st} \phi(t) dt=\lim_{s \to \infty}\hat{\phi}(s)
\end{equation}
hold.
Since $ g $ satisfies the conditions (C3) and (C4), the relations  (\ref{requirement_2}) and (\ref{requirement_3}) are satisfied.
In order to guarantee that a function is a PDF, its Fourier transform should be positive definite \cite{Pinsky}. By the condition (C6) and Proposition 4.4 in \cite{Schilling},  the function $ e^{-\zeta(\xi)} $ is positive definite.  Meanwhile, the other condition for $ w(x) $ to be a PDF is $ \tilde{w}(0)=1  $, which is satisfied by the formulas (\ref{zeta}) and (\ref{length_pdf}).
By the formulas  (\ref{pdf_fourier_laplace_space}), (\ref{time_pdf}) and the estimation
\begin{equation}
\nonumber
e^{-\zeta(\xi)} \approx 1-\zeta(\xi), \xi \to 0, 
\end{equation}
we have
\begin{equation}
\label{pdf_time_space}
\hat{\tilde{p}}(s,\xi)=\frac{\hat{g}(s)}{s\hat{g}(s)+\zeta(\xi)}.
\end{equation}
We can rewrite (\ref{pdf_time_space}) as
\begin{equation}
\nonumber
\hat{g}(s)s\hat{\tilde{p}}(s,\xi)-\hat{g}(s)=-\zeta(\xi)\hat{\tilde{p}}(s,\xi).
\end{equation}
Using the inverse Fourier-Laplace transform, we can easily obtain the general anomalous diffusion equation
\begin{equation}
\label{diffusion_equation}
\mathscr{D}_{(g)} p(t,x)=\mathscr{L}_{(k)} p(t,x), t>0, x\in \mathbb{R}^n.
\end{equation}

When $ g(t)=\frac{t^{-\alpha}}{\Gamma(1-\alpha)} $ for some $ 0<\alpha < 1 $ and $ k(x)=c^{-1}|x|^{-(n+2\beta)} $ for some $ 0<\beta< 1 $, the equation (\ref{diffusion_equation}) corresponds to the Caputo-Riesz time-space fractional diffusion equation
\begin{equation}
\label{Caputo_Riesz_diffusion_equation}
D^{\alpha} p(t,x)=-(-\triangle)^\beta p(t,x), t>0, x\in \mathbb{R}^n.
\end{equation}

If $ g(t)=\sum_{j=1}^m a_j \frac{t^{-\alpha_j}}{\Gamma(1-\alpha_j)} $ for some $ a_1,...,a_m>0, 
0<\alpha_m<\cdots<\alpha_1< 1 $ and $ k(x)=\sum_{j=1}^r \frac{b_j}{c}|x|^{-(n+2\beta_j)} $ for some $ b_1,...,b_r>0, 0<\beta_r<\cdots<\beta_1< 1 $, the equation (\ref{diffusion_equation}) yields the multi-term fractional diffusion equation
\begin{equation}
\label{multi_Caputo_Riesz_diffusion_equation}
\sum_{j=1}^m a_jD^{\alpha_j} p(t,x)=-\sum_{j=1}^rb_j(-\triangle)^{\beta_j} p(t,x), t>0, x\in \mathbb{R}^n.
\end{equation}

Setting $ g(t)=e^{-bt} \frac{t^{-\alpha}}{\Gamma(1-\alpha)} $ for some $ b>0, 0<\alpha< 1 $ and $ k(x)=c^{-1}|x|^{-(n+2\beta)} $ for some $ 0<\beta< 1 $,
the equation (\ref{diffusion_equation}) is of the form
\begin{align}
\nonumber
&\frac{1}{\Gamma(1-\alpha)}\frac{\partial}{\partial t}\int_0^t e^{-b(t-\tau)}(t-\tau)^{-\alpha} p(\tau,x)d\tau-e^{-bt} \frac{t^{-\alpha}}{\Gamma(1-\alpha)}p(0,x)\\
\label{tempered_Riesz_diffusion_equation}
&=-(-\triangle)^\beta p(t,x), t>0, x\in \mathbb{R}^n.
\end{align}
Here we note that for $ b=0 $, the equation (\ref{tempered_Riesz_diffusion_equation}) corresponds to  the equation (\ref{Caputo_Riesz_diffusion_equation}). 
We remark that when the condition: $\hat g(s) \rightarrow \infty $ as $ s \rightarrow 0 $ in the assumption (C4) for $g$ doesn't hold, we can still prove the same results as that of the present paper.  

The above mentioned examples are all ones whose MSDs diverge.
However, from the physical view point, displacements of randomly walking particles can never be arbitrarily large.
In equations (\ref{Caputo_Riesz_diffusion_equation}) and (\ref{tempered_Riesz_diffusion_equation}), if $ \beta=1 $, then we get the time fractional diffusion equations considered in detail in the previous publication \cite{Sandev_FCAA}. In this case, the MSD of the corresponding random walk process is finite. However, in general, we can never say that the jump length PDFs of all CTRW processes are of the standard form.
Thus, it is necessary to develop mathematical models of anomalous processes that incorporate large displacement events while keeping the second moments finite.

We can easily see that if $ \max\{x^2,1\}k(x) \in L^1(\mathbb{R}) $, then, using the relations $ \zeta(0)=0 $ and $ \zeta'(0)=0 $, the MSD is finite and
\begin{equation}
\nonumber
\hat{M_2}(s)=-\frac{\partial^2\hat{\tilde{p}}(s,\xi)(s,\xi)}{\partial \xi^2}\bigg|_{\xi=0}=\frac{\zeta''(0)}{s^2\hat{g}(s)}=
\frac{2q}{s^2\hat{g}(s)}\int_{\mathbb{R}} y^2k(y)dy.
\end{equation}
When $ k(x) $ is of the form (\ref{typical_general_exam}), we have
\begin{equation}
\label{zeta_expression}
\zeta(\xi)=2\int_{\mathbb{R}} (1-\cos(\xi y))q|y|^{-(1+2\beta)}e^{-h|y|}dy.
\end{equation}
Then, in one dimensional space, the Laplace transform of the mean squared displacement $ M_2(t) $ is of the form
\begin{equation}
\nonumber
\hat{M_2}(s)=\frac{\zeta''(0)}{s^2\hat{g}(s)}=\frac{2q}{s^2\hat{g}(s)}\int_{\mathbb{R}} |y|^{1-2\beta}e^{-h|y|}dy.
\end{equation}

If $ g(t)=e^{-bt} \frac{t^{-\alpha}}{\Gamma(1-\alpha)} $ for some $ b>0, 0<\alpha< 1 $  and 
 $ k(x) $ is taken as the form (\ref{typical_general_exam}), we obtain the following diffusion equation
\begin{align}
\nonumber
&\frac{1}{\Gamma(1-\alpha)}\frac{\partial}{\partial t}\int_0^t e^{-b(t-\tau)}(t-\tau)^{-\alpha} p(\tau,x)d\tau-e^{-bt} \frac{t^{-\alpha}}{\Gamma(1-\alpha)}p(0,x)\\
\label{tempered_time_space_diffusion_equation}
&=-F^{-1}(\zeta(\xi)Fp)(t,x) , t>0, x\in \mathbb{R},
\end{align}
where $ \zeta(\xi) $ is of the form (\ref{zeta_expression}).

\section{Cauchy problem of general diffusion equation}
\label{Section 3}
\setcounter{section}{3}
\setcounter{equation}{0}\setcounter{theorem}{0}
In this section, we investigate the general diffusion equation (\ref{diffusion_equation}) subject to the initial condition
\begin{equation}
\label{initial_condition}
p(0,x)=f(x), x \in \mathbb{R}^n.
\end{equation}

\subsection{Extension of domain of general Laplacian}

Based on Lemma \ref{property_of_general_Lapalcian}, we extend the domain of the general Laplacian $ \mathscr{L}_{(k)} $  to a Banach space.
\begin{definition}
\label{general_fractional_Sobolev_space_definition}
We define the general Sobolev space $ M_{(k)} $ by
\begin{equation}
\nonumber
M_{(k)}=\{ u\in L^2(\mathbb{R}^n):\zeta(\xi)Fu(\xi)\in L^2(\mathbb{R}^n)\},
\end{equation}
where $ \zeta $ is defined by formula (\ref{zeta}).
\end{definition}

\begin{theorem}
\label{banach_space}
The space $ M_{(k)} $ with the norm $ \|\cdot\|_{M_{(k)}} $ defined by
\begin{equation}
\nonumber
\|u\|_{M_{(k)}}=\|(1+\zeta(\xi))Fu(\xi)\|_{L^2(\mathbb{R}^n)}
\end{equation}
is a Banach space.
\end{theorem}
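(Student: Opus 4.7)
The plan is to run the standard weighted-$L^2$ Banach space argument, transported to the Fourier side by Plancherel and to the physical side by $F^{-1}$. Everything hinges on the elementary fact that $\zeta(\xi)\ge 0$, which is immediate from (\ref{zeta}) since $1-\cos(x\xi)\ge 0$ and $k>0$. Hence $1+\zeta(\xi)\ge 1$ pointwise a.e., so the weight is bounded below by $1$.

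First I would verify that $\|\cdot\|_{M_{(k)}}$ is genuinely a norm on $M_{(k)}$. Absolute homogeneity and the triangle inequality are inherited from the $L^2$ norm together with linearity of $F$. For the non-degeneracy, if $\|u\|_{M_{(k)}}=0$ then $(1+\zeta)Fu=0$ a.e.; since $1+\zeta\ge 1>0$, this forces $Fu=0$ a.e., and Plancherel gives $u=0$ in $L^2(\mathbb{R}^n)$. I would also note in passing that $M_{(k)}$ is a linear subspace of $L^2(\mathbb{R}^n)$ and that $\|u\|_{L^2}\le\|u\|_{M_{(k)}}$, again via Plancherel and $1+\zeta\ge 1$.

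Next I would establish completeness. Let $\{u_n\}$ be Cauchy in $M_{(k)}$. The inequality $\|u_n-u_m\|_{L^2}\le \|u_n-u_m\|_{M_{(k)}}$ shows that $\{u_n\}$ is Cauchy in $L^2(\mathbb{R}^n)$, so $u_n\to u$ in $L^2$ for some $u\in L^2(\mathbb{R}^n)$, and by Plancherel $Fu_n\to Fu$ in $L^2$. On the other hand, the sequence $w_n:=(1+\zeta)Fu_n$ is Cauchy in $L^2(\mathbb{R}^n)$ by definition of the norm, so $w_n\to w$ in $L^2$ for some $w\in L^2(\mathbb{R}^n)$.

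Finally, I would identify $w$ with $(1+\zeta)Fu$. Passing to a common subsequence, I may assume $Fu_{n_j}\to Fu$ and $w_{n_j}\to w$ pointwise almost everywhere; since $w_{n_j}=(1+\zeta)Fu_{n_j}$ and $1+\zeta$ is finite-valued a.e., taking pointwise limits gives $w=(1+\zeta)Fu$ a.e. Therefore $(1+\zeta)Fu\in L^2(\mathbb{R}^n)$, which (using $Fu\in L^2$) yields $\zeta Fu\in L^2$, so $u\in M_{(k)}$. Convergence in $M_{(k)}$ follows from $\|u_n-u\|_{M_{(k)}}=\|w_n-(1+\zeta)Fu\|_{L^2}=\|w_n-w\|_{L^2}\to 0$. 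The only mildly nontrivial step is this last identification of the weighted Fourier limit with $(1+\zeta)Fu$, and it is the standard trick of extracting an a.e. convergent subsequence — no other obstacle is expected.
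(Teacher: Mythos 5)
Your proof is correct and follows essentially the same route as the paper: both reduce completeness of $M_{(k)}$ to completeness of $L^2$ applied to the weighted Fourier transforms $(1+\zeta)Fu_j$, using $1+\zeta\ge 1$. The only cosmetic difference is that the paper defines the limit directly as $u=F^{-1}\big((1+\zeta)^{-1}v\big)$ from the $L^2$-limit $v$ of the weighted transforms, while you recover $u$ as the $L^2$-limit of $u_j$ and then identify $(1+\zeta)Fu$ with the weighted limit via an a.e.\ convergent subsequence; both are valid instances of the standard argument.
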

\begin{proof}
Let $\{ u_j \}$ be a Cauchy sequence in $ M_{(k)} $.
Then there exists a $ v \in L^2(\mathbb{R}^n) $ such that
\begin{equation}
\nonumber
\lim_{j \rightarrow \infty}\|(1+\zeta(\xi))Fu_j(\xi)\|_{ L^2(\mathbb{R}^n)}=\|v\|_{ L^2(\mathbb{R}^n)}.
\end{equation}
Let $ f(\xi)=(1+\zeta(\xi))^{-1} $.
Then it is obvious that $ fv\in L^2(\mathbb{R}^n) $.
Setting $ u=F^{-1}(fv) $, we can easily see that $ u\in M_{(k)} $.
It follows from the continuity of Fourier transform in $ L^2(\mathbb{R}^n) $ that $ \lim\limits_{j \rightarrow \infty} \|u_j-u\|_{M_{(k)}}=0 $.
\end{proof}

\begin{theorem}
\label{density}
The Schwartz space $ S(\mathbb{R}^n) $ is dense in $ M_{(k)} $.
\end{theorem}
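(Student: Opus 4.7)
The plan is to approximate an arbitrary $u\in M_{(k)}$ in the $M_{(k)}$-norm by a two-step construction carried out on the Fourier side: first a spectral truncation of $\hat u = Fu$ to a large ball, then a mollification of the truncated function in $\xi$-space. Since the end product has Fourier transform in $C_c^\infty(\mathbb{R}^n)$, its inverse Fourier transform lies in $S(\mathbb{R}^n)$ (because $F^{-1}$ maps $S$ onto $S$ and $C_c^\infty \subset S$). This sidesteps any need to invert or differentiate the weight $1+\zeta$ directly, which is useful since $\zeta$ is not assumed smooth.

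For the first step, fix $R>0$ and set $\hat u_R = \hat u\,\chi_{B_R}$, $u_R = F^{-1}\hat u_R$. By the definition of $M_{(k)}$ we have $(1+\zeta)\hat u \in L^2(\mathbb{R}^n)$, so
\begin{equation*}
(1+\zeta)(\hat u - \hat u_R) = (1+\zeta)\hat u\,\chi_{\mathbb{R}^n\setminus B_R}
\end{equation*}
is dominated by $|(1+\zeta)\hat u|\in L^2$ and tends to $0$ pointwise a.e.\ as $R\to\infty$. Dominated convergence then gives $\|u-u_R\|_{M_{(k)}}\to 0$.

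For the second step, choose $\rho\in C_c^\infty(\mathbb{R}^n)$ with $\rho\geq 0$, $\int\rho = 1$, $\mathrm{supp}\,\rho\subset B_1$, and set $\rho_\delta(\xi) = \delta^{-n}\rho(\xi/\delta)$. Define $\hat\varphi_{R,\delta} = \hat u_R * \rho_\delta$, which lies in $C_c^\infty(\mathbb{R}^n)$ with $\mathrm{supp}\,\hat\varphi_{R,\delta}\subset B_{R+\delta}$, and put $\varphi_{R,\delta} = F^{-1}\hat\varphi_{R,\delta}\in S(\mathbb{R}^n)$. The key point is that $\zeta$ defined by (\ref{zeta}) is continuous on $\mathbb{R}^n$: this follows from dominated convergence applied to the defining integral, using the elementary bound $1-\cos(\xi y)\leq \min\{2,\tfrac12|\xi|^2|y|^2\}$ together with the standing hypothesis $\min\{|y|^2,1\}k(y)\in L^1(\mathbb{R}^n)$. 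Hence $\zeta$ is bounded on $\overline{B_{R+1}}$ by some $M_R<\infty$, and for $\delta\leq 1$ the supports of $\hat u_R$ and $\hat\varphi_{R,\delta}$ both lie in $B_{R+1}$, giving
\begin{equation*}
\|u_R-\varphi_{R,\delta}\|_{M_{(k)}} = \|(1+\zeta)(\hat u_R - \hat\varphi_{R,\delta})\|_{L^2} \leq (1+M_R)\,\|\hat u_R - \hat\varphi_{R,\delta}\|_{L^2},
\end{equation*}
which tends to $0$ as $\delta\to 0$ by the standard $L^2$-convergence of mollification. A diagonal selection $R_n\to\infty$, $\delta_n\to 0$ then produces a sequence in $S(\mathbb{R}^n)$ converging to $u$ in $M_{(k)}$.

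The main obstacle is the continuity, and hence local boundedness, of $\zeta$, which is what lets me absorb the Fourier-side weight $(1+\zeta)$ on the band where the mollification error is nonzero; once that observation is in hand the argument follows the standard ``truncate-and-mollify'' template for Fourier-defined Sobolev-type spaces. I would not expect an alternative strategy (for instance approximating $(1+\zeta)\hat u \in L^2$ by $C_c^\infty$ and dividing by $1+\zeta$) to go through as cleanly, because it would require smoothness of $1/(1+\zeta)$, which is not available under the given hypotheses on $k$.
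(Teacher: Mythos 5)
Your proof is correct, and it takes a genuinely different route from the paper's. The paper approximates the weighted transform $(1+\zeta)Fu$ in $L^2(\mathbb{R}^n)$ by functions $v_j\in C_0^\infty(\mathbb{R}^n)$ and then sets $u_j=(1+\zeta)^{-1}v_j$; this makes the $M_{(k)}$-convergence immediate, since $(1+\zeta)(u_j-Fu)=v_j-(1+\zeta)Fu$, but it then asserts that $u_j\in C_0^\infty(\mathbb{R}^n)$ so that $F^{-1}u_j\in S(\mathbb{R}^n)$ --- a step that implicitly requires smoothness of $\zeta$, which the hypotheses on $k$ do not provide (a point you correctly flag at the end of your write-up). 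Your truncate-and-mollify argument on the Fourier side needs only the continuity, hence local boundedness, of $\zeta$, which you correctly extract from the bound $1-\cos(\xi y)\leq\min\{2,\tfrac12|\xi|^2|y|^2\}$ and dominated convergence; the price is a two-step error estimate and a diagonal selection, but the payoff is that the approximants genuinely have $C_c^\infty$ Fourier transforms and are therefore honestly Schwartz. In short, your version proves the same statement under the stated hypotheses without the unjustified smoothness claim, at the cost of being slightly longer.
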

\begin{proof}
Let $ u \in  M_{(k)}  $.
It follows from the density of the space $ C_0^{\infty}(\mathbb{R}^n) $ in $L^2(\mathbb{R}^n)$ that there exists a sequence 
$\{v_j\}\in C_0^{\infty}(\mathbb{R}^n) $ such that 
\begin{equation}
\nonumber
\lim_{j \rightarrow \infty}\|v_j(\xi)-(1+\zeta(\xi))Fu(\xi)\|_{ L^2(\mathbb{R}^n)}=0.
\end{equation}
Then the function $ u_j(\xi)= (1+\zeta(\xi))^{-1}v_j(\xi)$ belongs to $ C_0^{\infty}(\mathbb{R}^n) $ and converges to $Fu$ in  $L^2(\mathbb{R}^n)$.
The continuity of $ F^{-1} $ implies that $ F^{-1}u_j $ converges to $ u $. 
It is obvious the $ F^{-1}u_j $ is in $ S(\mathbb{R}^n) $.
\end{proof}
\begin{theorem}
The general Laplacican $ \mathscr{L}_{(k)} $ is extended to the Banach space $ M_{(k)} $.
\end{theorem}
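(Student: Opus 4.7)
The plan is to define the extension via the Fourier multiplier representation supplied by Lemma \ref{property_of_general_Lapalcian}, and then check that the definition makes sense on $M_{(k)}$, agrees with the original operator on $S(\mathbb{R}^n)$, and is uniquely determined by continuity together with density.

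First I would \emph{define} the extended operator by
$$\tilde{\mathscr{L}}_{(k)}u := -F^{-1}(\zeta(\xi)Fu(\xi)), \qquad u \in M_{(k)}.$$
This is well posed because the very definition of $M_{(k)}$ forces $\zeta Fu \in L^2(\mathbb{R}^n)$, so $F^{-1}(\zeta Fu)$ exists in $L^2(\mathbb{R}^n)$. Plancherel's identity then gives
$$\|\tilde{\mathscr{L}}_{(k)}u\|_{L^2(\mathbb{R}^n)} = \|\zeta Fu\|_{L^2(\mathbb{R}^n)} \leq \|(1+\zeta)Fu\|_{L^2(\mathbb{R}^n)} = \|u\|_{M_{(k)}},$$
so $\tilde{\mathscr{L}}_{(k)}: M_{(k)} \to L^2(\mathbb{R}^n)$ is bounded and linear.

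Next I would verify that $S(\mathbb{R}^n) \subset M_{(k)}$, so that the domain is genuinely enlarged and the comparison with $\mathscr{L}_{(k)}$ is meaningful. For this I need a growth bound on $\zeta$. Splitting the defining integral (\ref{zeta}) at $|y| = 1$ and applying the elementary inequalities $1 - \cos(\xi y) \leq \tfrac{1}{2}|\xi|^2 |y|^2$ for $|y| \leq 1$ and $1 - \cos(\xi y) \leq 2$ for $|y| > 1$, together with the standing assumption $\min\{|y|^2, 1\}k(y) \in L^1(\mathbb{R}^n)$, yields a constant $C > 0$ with $\zeta(\xi) \leq C(1 + |\xi|^2)$. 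Since $Fv \in S(\mathbb{R}^n)$ decays faster than any polynomial for $v \in S(\mathbb{R}^n)$, the product $(1+\zeta(\xi))Fv(\xi)$ belongs to $L^2(\mathbb{R}^n)$, so $v \in M_{(k)}$.

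Finally, Lemma \ref{property_of_general_Lapalcian} states exactly that $\mathscr{L}_{(k)}v = -F^{-1}(\zeta Fv) = \tilde{\mathscr{L}}_{(k)}v$ for every $v \in S(\mathbb{R}^n)$, so the two operators coincide on the Schwartz space. Density of $S(\mathbb{R}^n)$ in $M_{(k)}$ (Theorem \ref{density}) combined with the continuity estimate above makes this the unique continuous extension. I do not anticipate a genuine obstacle: the only nontrivial ingredient is the quadratic growth bound on $\zeta$, and that is a routine splitting argument built directly from the integrability hypothesis on $k$.
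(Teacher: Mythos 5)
Your proposal is correct and follows essentially the same route as the paper, whose proof is a one-line appeal to the density of $S(\mathbb{R}^n)$ in $M_{(k)}$ (Theorem \ref{density}) and the extension principle; you have simply supplied the details that the paper leaves implicit, namely the Plancherel bound $\|\zeta Fu\|_{L^2}\leq\|u\|_{M_{(k)}}$, the inclusion $S(\mathbb{R}^n)\subset M_{(k)}$ via the quadratic growth of $\zeta$, and the identification of the extension with the Fourier multiplier from Lemma \ref{property_of_general_Lapalcian}. No gaps.
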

\begin{proof}
By the density of $ S(\mathbb{R}^n) $ in $ M_{(k)} $ and the extension principle, the desired result is easily proved.
\end{proof}

Here we note that when $ k(x)=|x|^{-(n+2\beta)} $ for some $ 0<\beta<1 $, $ M_{(k)} $ corresponds to the fractional Sobolev space introduced in detai in \cite{Nezza}.

\subsection{Existence  and properties of solutions}

\begin{lemma}
\label{property of CMF}
Let $ g $ be a completely monotone function.  
Then the following relations hold.\\
(a) If $ g(0+)< \infty $, 
\begin{equation}
\nonumber
|g^{(m)}(t)|< {g(0+)}\bigg(\frac{m}{et}\bigg)^m, m \in \mathbb{N}, t>0.
\end{equation}
(b) If $ \lim\limits_{t \to \infty} g(t)=0 $,
\begin{equation}
\nonumber
\lim_{t \to \infty} \frac{\|g\|_{L^1(0,t)}}{t}=0.
\end{equation}
\end{lemma}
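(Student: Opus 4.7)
My plan for part (a) is to appeal to Bernstein's theorem, which identifies completely monotone functions with Laplace transforms of non-negative Borel measures on $[0,\infty)$. Writing
\[
g(t) = \int_0^\infty e^{-st}\,\mu(ds), \qquad \mu([0,\infty)) = g(0+) < \infty,
\]
I would differentiate under the integral sign $m$ times to obtain
\[
|g^{(m)}(t)| = \int_0^\infty s^m e^{-st}\,\mu(ds).
\]
The remaining step is a one-variable calculus optimization: the map $s \mapsto s^m e^{-st}$ on $(0,\infty)$ attains its unique maximum at $s = m/t$ with value $\left(\frac{m}{et}\right)^m$. Plugging in this pointwise upper bound and using $\mu([0,\infty)) = g(0+)$ yields $|g^{(m)}(t)| \leq g(0+)\left(\frac{m}{et}\right)^m$. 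Strict inequality holds at every $t>0$ except possibly when $\mu$ reduces to a Dirac mass placed exactly at $s = m/t$, which gives the claimed strict bound for the generic kernels of interest.

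For part (b), the plan is a classical Cesaro-type splitting. Since a completely monotone function is automatically non-negative and non-increasing, the assumption $g(t) \to 0$ implies that for every $\varepsilon > 0$ there exists $T > 0$ with $0 \leq g(s) < \varepsilon$ on $[T,\infty)$. Splitting the integral then gives
\[
\frac{\|g\|_{L^1(0,t)}}{t} \;\leq\; \frac{\|g\|_{L^1(0,T)}}{t} \;+\; \frac{\varepsilon(t-T)}{t}.
\]
Letting $t \to \infty$ drives the first term to zero and bounds the second by $\varepsilon$, so $\limsup_{t\to\infty}\|g\|_{L^1(0,t)}/t \leq \varepsilon$; since $\varepsilon$ was arbitrary, the limit is $0$. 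The local integrability of $g$ on $[0,T]$ needed here is supplied by the standing assumption (C1), which forces $g \in L^1_{\mathrm{loc}}[0,\infty)$.

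The step I expect to be the main technical subtlety is justifying the interchange of differentiation and integration in part (a). To handle this I would invoke dominated convergence: for $t$ in any compact subinterval $[a,b] \subset (0,\infty)$, the integrand $s^m e^{-st}$ is bounded above by $s^m e^{-as}$, which in turn is dominated by $C_m e^{-as/2}$ for a suitable constant $C_m > 0$, while $\int_0^\infty e^{-as/2}\,\mu(ds) = g(a/2) < \infty$. With this domination in place, iterated differentiation under the integral sign is routine, and the rest of the argument is elementary calculus combined with the defining properties of completely monotone functions.
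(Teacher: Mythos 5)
Your proposal is correct. For part (a) you follow essentially the paper's own route: Bernstein's theorem, differentiation under the integral, and the pointwise optimization $\sup_{s\ge 0} s^m e^{-st}=\left(\tfrac{m}{et}\right)^m$ attained at $s=m/t$; you additionally justify the interchange of derivative and integral (which the paper takes for granted) and correctly flag that strictness can fail only when the representing measure is a Dirac mass at $m/t$. For part (b) you take a genuinely different and more elementary route. The paper pushes the Bernstein representation further: it writes $g(s)=\int_0^\infty s e^{-s\tau}\sigma(\tau)\,d\tau$, computes $\|g\|_{L^1(0,t)}=\int_0^\infty \tau^{-2}\bigl(1-e^{-t\tau}-t\tau e^{-t\tau}\bigr)\sigma(\tau)\,d\tau$, and concludes by dominated convergence after bounding $\sigma(\tau)\le\sigma(\infty)$. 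Your Ces\`aro-type splitting uses only that a completely monotone function is nonnegative and nonincreasing together with $g(t)\to 0$, and it buys two things: it is shorter, and it avoids the paper's bound $\sigma(\tau)\le\sigma(\infty)=g(0+)$, which is problematic precisely for the singular kernels of main interest (e.g.\ $g(t)=t^{-\alpha}/\Gamma(1-\alpha)$, where $g(0+)=\infty$). The one hypothesis your argument genuinely needs beyond the lemma's literal statement is local integrability of $g$ near $0$ (without it the claim is false, e.g.\ $g(t)=t^{-2}$ is completely monotone and vanishes at infinity but is not in $L^1(0,1)$); you correctly supply this from the standing assumption (C1), which is the context in which the lemma is applied.
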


\begin{proof}
Firstly, we prove the part (a) of the lemma.
Since  the function $ g $ is completely monotone, by Bernstein's theorem \cite{Pruss}, there exists a nondecreasing function $ \sigma:[0, \infty) \rightarrow  \mathbb{R} $ such that 
\begin{equation}
\nonumber
g(t)=\int_0^\infty e^{-ts} d\sigma(s), t>0,
\end{equation}
where $ \sigma(0)=0 $ and $  \sigma(t) $ is continuous from the left.
Moreover,
\begin{equation}
\nonumber
(-1)^mg^{(m)}(t)=\int_0^\infty s^me^{-ts}d\sigma (s), m=0,1,2,..., t>0,
\end{equation}
and
\begin{equation}
\nonumber
(-1)^mg^{(m)}(0+)=\int_0^\infty s^md\sigma (s).
\end{equation}
Then, for $m=0$, we have
\begin{equation}
\nonumber
g(0+)=\int_0^\infty d\sigma (s)=\sigma(\infty).
\end{equation}
We can easily see that
\begin{equation}
\nonumber
 \sup_{s \in [0,\infty)}s^me^{-ts}=\bigg(\frac{m}{et}\bigg)^m,  m\in \mathbb{N}, t>0.
 \end{equation}
Thus, we have
\begin{equation}
\nonumber
|g^{(m)}(t)|=\int_0^\infty s^me^{-ts}d\sigma (s) < g(0+)\bigg(\frac{m}{et}\bigg)^m, m \in \mathbb{N}, t>0.
\end{equation}
Secondly, we  prove the part (b) of the lemma.
By Fubini's theorem, for $ s>0 $, we have
\begin{align}
\nonumber
&g(s)=\int_0^\infty e^{-sw} d\sigma(w)= \int_0^\infty \int_w^\infty se^{-s \tau} d\tau d\sigma(w)=\int_0^\infty \int_0^\tau se^{-s \tau} d\sigma(w) d\tau\\
\nonumber
&=\int_0^\infty se^{-s \tau} \sigma(\tau) d\tau.
\end{align}
For $ t>0 $, we deduce
\begin{align}
\nonumber
&\|g\|_{L^1(0,t)}=\int_0^t \int_0^\infty se^{-s \tau} \sigma(\tau) d\tau ds=\int_0^\infty \int_0^tse^{-s \tau} ds \sigma(\tau) d\tau\\
\nonumber
&=\int_0^\infty \frac{1}{\tau^2}(1-e^{-t\tau}-t\tau e^{-t\tau}) \sigma(\tau) d\tau.
\end{align}
Then, for $ t>0 $,
\begin{align}
\nonumber
&\frac{\|g\|_{L^1(0,t)}}{t}= \int_0^\infty \frac{1-e^{-t\tau}-t\tau e^{-t\tau}}{t\tau^2} \sigma(\tau) d\tau 
\leq  \int_0^\infty \frac{1-e^{-t\tau}-t\tau e^{-t\tau}}{t\tau^2}  \sigma(\infty) d\tau\\
\nonumber
&= \int_0^\infty \frac{1-e^{-w}-w e^{-w}}{w^2}  \sigma(\infty) dw.
\end{align}
Since 
\begin{equation}
\nonumber
\lim_{w \to 0} \frac{1-e^{-w}-w e^{-w}}{w^2}=\frac{1}{2}
\end{equation}
and
\begin{equation}
\nonumber
\sup_{w\in [0,\infty)}\{1-e^{-w}-w e^{-w}\}=1,
\end{equation}
we have
\begin{equation}
\nonumber
\int_0^\infty \frac{1-e^{-w}-w e^{-w}}{w^2} dw=\int_0^\delta  dw+\int_\delta^\infty \frac{1}{w^2} dw<\infty,
\end{equation}
where $ \delta>0 $ is a real number such that
\begin{equation}
\nonumber
 \frac{1-e^{-w}-w e^{-w}}{w^2} < 1, w \in (0, \delta).
\end{equation}
It follows from Lebesgue's dominated convergence theorem that
\begin{align}
\nonumber
&\lim_{t \to \infty}\frac{\|g\|_{L^1(0,t)}}{t}=\lim_{t \to \infty}  \int_0^\infty \frac{1-e^{-t\tau}-t\tau e^{-t\tau}}{t\tau^2} \sigma(\tau) d\tau\\
\nonumber
&= \int_0^\infty \lim_{t \to \infty} \frac{1-e^{-t\tau}-t\tau e^{-t\tau}}{t\tau^2} \sigma(\tau) d\tau=0.
\end{align}
\end{proof}

\begin{theorem}
Let $ f \in M_{(k)} $.
Then the general diffusion equation (\ref{diffusion_equation}) with the initial condition (\ref{initial_condition}) has a unique solution
$p\in  C([0,\infty), M_{(k)}) $.
The following relations  hold.
\begin{align}
\label{p_L2}
&\|p(t,\cdot)\|_{L^2(\mathbb{R}^n)}\leq \|f\|_{L^2(\mathbb{R}^n)}, t \geq 0.\\
\label{p_Mk}
&\|p(t,\cdot)\|_{M_{(k)}}\leq \|f\|_{M_{(k)}}, t \geq 0.\\
\label{p_zero_Mk}
&\lim_{t \to 0}\|p(t,\cdot)-f\|_{M_{(k)}}=0.\\
\label{p_derivative_L2}
&\bigg\|\frac{\partial^m p(t,\cdot)}{\partial t^m}\bigg\|_{L^2(\mathbb{R}^n)}\leq \bigg(\frac{m}{et}\bigg)^m\|f\|_{L^2(\mathbb{R}^n)}, m \in \mathbb{N}, t>0 .\\
\label{p_derivative_Mk}
&\bigg\|\frac{\partial^m p(t,\cdot)}{\partial t^m}\bigg\|_{M_{(k)}}\leq \bigg(\frac{m}{et}\bigg)^m\|f\|_{M_{(k)}},m \in \mathbb{N}, t>0 .\\
\label{Dg}
&\|D_{(g)}p(t,\cdot)\|_{L^2(\mathbb{R}^n)}\leq\|f\|_{M_{(k)}}, t>0.\\
\label{p_infty_L2}
&\lim_{t \to \infty}\|p(t,\cdot)\|_{M_{(k)}}=0.
\end{align}
If $  f  $ is nonnegative, then the solution is also nonnegative. 
In particular, if $  f  $ is bounded, then the solution is also  bounded.
\end{theorem}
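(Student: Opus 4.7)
The strategy is to diagonalize via the Fourier transform and analyze the resulting family of scalar Cauchy problems mode by mode. By Lemma \ref{property_of_general_Lapalcian}, $\mathscr{L}_{(k)}$ acts as multiplication by $-\zeta(\xi)$ in frequency, so a formal Fourier transform of (\ref{diffusion_equation}) with (\ref{initial_condition}) yields, for each fixed $\xi$, the scalar problem
\begin{equation*}
\mathscr{D}_{(g)}\hat p(t,\xi)=-\zeta(\xi)\hat p(t,\xi), \qquad \hat p(0,\xi)=\hat f(\xi).
\end{equation*}
When $\zeta(\xi)>0$ (which holds a.e.\ since $k>0$ pointwise), Lemma \ref{property_of_general_Caputo} supplies a unique completely monotone solution $\hat f(\xi)\,v_\xi(t)$, where $v_\xi\in C[0,\infty)$ with $v_\xi(0)=1$; the degenerate fibres $\zeta(\xi)=0$ contribute only a null set. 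I \emph{define} $p(t,\cdot):=F^{-1}\!\bigl(v_\xi(t)\hat f(\xi)\bigr)$ and will check that this function enjoys all the stated properties.

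\textbf{Norm bounds, continuity, and large-time decay.} Since a CM function with $v_\xi(0+)=1$ is nonnegative and nonincreasing, $0\leq v_\xi(t)\leq 1$, so Plancherel gives (\ref{p_L2}) and (\ref{p_Mk}) immediately. Continuity $p\in C([0,\infty),M_{(k)})$, the initial-value limit (\ref{p_zero_Mk}), and the large-time decay (\ref{p_infty_L2}) all follow by dominated convergence with the $\xi$-majorant $(1+\zeta(\xi))^2|\hat f(\xi)|^2$: the pointwise convergences $v_\xi(t)\to v_\xi(t_0)$ and $v_\xi(t)\to 1$ as $t\to 0$ come directly from $v_\xi\in C[0,\infty)$, while the final-time limit $v_\xi(t)\to 0$ for $\zeta(\xi)>0$ follows from the final value theorem applied to $\Lambda v_\xi(s)=\hat g(s)/(s\hat g(s)+\zeta(\xi))$ together with condition (C4), which ensures $s\hat g(s)\to 0$ as $s\to 0$.

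\textbf{Derivative bounds, the PDE, and uniqueness.} For $t>0$, Lemma \ref{property of CMF}(a) applied to the CM function $v_\xi$ with $v_\xi(0+)=1$ gives the $\xi$-uniform bound $|v_\xi^{(m)}(t)|\leq (m/(et))^m$; differentiation under the Fourier integral is then legitimate and yields (\ref{p_derivative_L2}) and (\ref{p_derivative_Mk}). Equation (\ref{diffusion_equation}) holds in $L^2$ because by construction $F\{\mathscr{D}_{(g)}p\}(t,\xi)=-\zeta(\xi)v_\xi(t)\hat f(\xi)=F\{\mathscr{L}_{(k)}p\}(t,\xi)$, and (\ref{Dg}) is immediate from this identity together with $|v_\xi(t)|\leq 1$. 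Uniqueness reduces to the fibrewise uniqueness in Lemma \ref{property_of_general_Caputo}: any $p\in C([0,\infty),M_{(k)})$ solving the problem has $\hat p(\cdot,\xi)$ continuous at $0$ for a.e.\ $\xi$ and satisfies the scalar ODE above.

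\textbf{Positivity and boundedness.} For each fixed $s>0$ the map $\zeta\mapsto \hat g(s)/(s\hat g(s)+\zeta)$ is completely monotone on $[0,\infty)$ as a translate of $\zeta\mapsto 1/\zeta$; consequently one expects $v_\xi(t)=V_t(\zeta(\xi))$ with $V_t$ itself completely monotone in $\zeta$, and since $\zeta$ is a continuous negative definite function, the composition $\xi\mapsto V_t(\zeta(\xi))$ is positive definite by the classical composition rule (Schilling, Theorem 3.7). Bochner's theorem then produces a probability measure $P(t,\cdot)$ with $F P(t,\cdot)=v_\xi(t)$ (mass $1$ because $v_0(t)=1$), whence $p(t,\cdot)=P(t,\cdot)\ast f$, giving $p\geq 0$ when $f\geq 0$ and $\|p(t,\cdot)\|_\infty\leq \|f\|_\infty$. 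The main obstacle is rigorously justifying that $V_t$ is CM in $\zeta$; the cleanest workaround is to bypass this step by using directly the CTRW series from Section 2, namely
\begin{equation*}
\hat{\tilde p}(s,\xi)=\frac{1-\hat\phi(s)}{s}\sum_{n=0}^\infty \hat\phi(s)^n\tilde w(\xi)^n,
\end{equation*}
which inverts termwise to an explicit expansion of $P(t,x)$ as a sum of convolutions of the nonnegative functions $\phi$, $w$, and the survival function $1-\int_0^t\phi$, making nonnegativity and the probability-measure property manifest.
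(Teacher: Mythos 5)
Most of your argument coincides with the paper's: Fourier diagonalization into the scalar problems $\mathscr{D}_{(g)}\tilde p=-\zeta(\xi)\tilde p$, the completely monotone fibrewise solution $Z(t,\xi)$ supplied by Lemma \ref{property_of_general_Caputo}, Plancherel for (\ref{p_L2})--(\ref{p_Mk}), Lemma \ref{property of CMF}(a) for the derivative bounds, and dominated convergence for the limits. Your route to (\ref{p_infty_L2}) via the final value (Abelian) theorem applied to $\hat Z(s,\xi)=\hat g(s)/(s\hat g(s)+\zeta(\xi))$, using that $s\hat g(s)\to 0$ as $s\to 0$ and that the monotone bounded function $Z(\cdot,\xi)$ has a limit at infinity, is a clean alternative to the paper's quantitative bound $Z(t,\xi)\leq\bigl(1+t\zeta(\xi)/\|g\|_{L^1(0,t)}\bigr)^{-1}$ obtained from Young's inequality together with Lemma \ref{property of CMF}(b); both are acceptable.

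The gap is in positivity and boundedness. You correctly identify that what is needed is complete monotonicity of $\zeta\mapsto Z(t,\xi)$ for fixed $t$, equivalently a subordination formula $Z(t,\xi)=\int_0^\infty\phi(t,\tau)e^{-\tau\zeta(\xi)}\,d\tau$ with $\phi(t,\cdot)\geq 0$ of total mass one, and you concede that you cannot justify it. The paper closes exactly this step: since $\hat g$ is Stieltjes, $s\hat g(s)$ is a complete Bernstein function, hence $e^{-\tau s\hat g(s)}$ is the Laplace transform of a completely monotone (in particular nonnegative) function, and $\phi(t,\tau):=\Lambda^{-1}\{\hat g(s)e^{-\tau s\hat g(s)}\}(t)$ furnishes the required representation. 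Your proposed workaround does not repair the omission: with $\hat\phi=(1+s\hat g)^{-1}$ and $\tilde w=e^{-\zeta}$, the CTRW series $\frac{1-\hat\phi(s)}{s}\sum_{n}\hat\phi(s)^n\tilde w(\xi)^n$ sums to $\hat g(s)/\bigl(s\hat g(s)+1-e^{-\zeta(\xi)}\bigr)$, not to $\hat g(s)/(s\hat g(s)+\zeta(\xi))$; the equation (\ref{diffusion_equation}) arises from the CTRW only after the diffusive-limit replacement $e^{-\zeta(\xi)}\approx 1-\zeta(\xi)$, so termwise inversion of that series produces the (manifestly nonnegative) propagator of a \emph{different} evolution, the one with symbol $1-e^{-\zeta}$ in place of $\zeta$. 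Positivity and boundedness of the solution of (\ref{diffusion_equation}) are therefore not established by your proposal. Note also that your observation that $\zeta\mapsto\hat g(s)/(s\hat g(s)+\zeta)$ is completely monotone for each fixed $s$ does not by itself yield complete monotonicity at fixed $t$; transferring the property through the inverse Laplace transform in $s$ is precisely what the complete-Bernstein-function argument accomplishes.
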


\begin{proof}
Taking the Fourier transform to (\ref{diffusion_equation}) and (\ref{initial_condition}) with respect to the spatial variable $ x $, we obtain
\begin{equation}
\label{general_fractional_Laplace_diffusion_Fourier}
\mathscr{D}_{(g)} \tilde{p}(t,\xi) = -\zeta{(\xi)} \tilde{p}(t,\xi), t>0, \xi \in \mathbb{R}^n
\end{equation}
and
\begin{equation}
\label{initial_condition_Fourier}
\tilde{p}(0,\xi)=\tilde{f}(\xi).
\end{equation}
By  Theorem 3.2 in \cite{Sin}, the initial value problem (\ref{general_fractional_Laplace_diffusion_Fourier})-(\ref{initial_condition_Fourier}) has a unique solution in the space of continuous functions. In particular, the unique solution is of the  form: $\tilde{p}(t,\xi)=\tilde{f}(\xi)Z(t,\xi)$.
Here $Z(t,\xi)$ is the solution of the equation (\ref{general_fractional_Laplace_diffusion_Fourier}) with the initial condition $\tilde{p}(0,\xi)=1$. By Lemma \ref{property_of_general_Caputo}, $Z(t,\xi)$ is completely monotone with respect to $ t $. Then we can conclude
\begin{equation}
\nonumber
|\tilde{p}(t,\xi)| \leq |\tilde{f}(\xi)|, t>0, \xi \in \mathbb{R}^n.
\end{equation}
By Theorem 5.2 in \cite{Sin}, $ Z(t,\xi)$ continuously depends on $\xi$ . 
Thus, $ Z(t,\xi)$ is measurable with respect to $ \xi $.  Since $ f\in L^2(\mathbb{R}^n) $,  $  \tilde{f} \in L^2(\mathbb{R}^n) $. Then the function $ p(t,\cdot) $ is also in $ L^2(\mathbb{R}^n)$. Using the inverse Fourier transform, $ p(t,x) $ is obtained from $ \tilde{p}(t,\xi) $.
If $ f \in M_{(k)} $, then $ p(t,\cdot) $ is also in $ M_{(k)} $.
We can easily see that $ p(t,x) $ is continuous with respect to $ t $ by using the continuity of $ Z(t,\xi)  $ with respect to $ t $.

By Plancherel's theorem, for $ t\geq 0 $, we have
\begin{align}
\nonumber
&\|p(t,\cdot)\|_{L^2(\mathbb{R}^n)}=\|\tilde{p}(t,\cdot)\|_{L^2(\mathbb{R}^n)}=\|\tilde{f}(\cdot)Z(t,\cdot)\|_{L^2(\mathbb{R}^n)}\leq \|\tilde{f}\|_{L^2(\mathbb{R}^n)}=\|f\|_{L^2(\mathbb{R}^n)},\\
\nonumber
&\|p(t,\cdot)\|_{M_{(k)}}=\|(1+\zeta(\cdot))\tilde{p}(t,\cdot)\|_{L^2(\mathbb{R}^n)}\leq \|(1+\zeta(\cdot))\tilde{f}(\cdot)\|_{L^2(\mathbb{R}^n)} = \|f\|_{M_{(k)}}.\end{align}
For $ t>0 $,  we deduce
\begin{align}
\nonumber
&\|p(t,\cdot)-f\|_{M_{(k)}}=\|(1+\zeta(\cdot))(\tilde{p}(t,\cdot)-\tilde{f})\|_{L^2(\mathbb{R}^n)}\\
\nonumber
&=\|(1+\zeta(\cdot))\tilde{f}(\cdot) (1-Z(t,\cdot))\|_{L^2(\mathbb{R}^n)}\leq 2\|f\|_{M_{(k)}}.
\end{align}
Then it follows from  Lebesgue's dominated convergence theorem that
\begin{align}
\nonumber
&\lim_{t \to 0}\|p(t,\cdot)-f\|_{M_{(k)}}=\lim_{t \to 0} \|(1+\zeta(\cdot))(\tilde{p}(t,\cdot)-\tilde{f})\|_{L^2(\mathbb{R}^n)}\\
\nonumber
&=\|(1+\zeta(\cdot))\tilde{f}(\cdot)\lim_{t \to 0} (1-Z(t,\cdot))\|_{L^2(\mathbb{R}^n)}=0.
\end{align}
By (a) of Lemma \ref{property of CMF}, for $m \in \mathbb{N}, t>0 $,  we estimate
\begin{align}
\nonumber
&\bigg\|\frac{\partial^m \tilde{p}(t,\cdot)}{\partial t^m}\bigg\|_{L^2(\mathbb{R}^n)}=\bigg\|\tilde{f}(\cdot)\frac{\partial^m Z(t,\cdot)}{\partial t^m}\bigg\|_{L^2(\mathbb{R}^n)}\leq \bigg(\frac{m}{et}\bigg)^m\|\tilde{f}\|_{L^2(\mathbb{R}^n)}\\
\nonumber
&=\bigg(\frac{m}{et}\bigg)^m \|f\|_{L^2(\mathbb{R}^n)}.
\end{align}
Similarly, we can prove the relation (\ref{p_derivative_Mk}).
For $ t>0 $, we have
\begin{align}
\nonumber
&\|D_{(g)}p(t,\cdot)\|_{L^2(\mathbb{R}^n)}=\|D_{(g)}\tilde{p}(t,\cdot)\|_{L^2(\mathbb{R}^n)}=\|\zeta(\cdot)\tilde{p}(t,\cdot)\|_{L^2(\mathbb{R}^n)}\\
\nonumber
&=\|\zeta(\cdot)\tilde{f}(\cdot)Z(t,\cdot)\|_{L^2(\mathbb{R}^n)}\leq\|\zeta(\cdot)\tilde{f}(\cdot)\|_{L^2(\mathbb{R}^n)}\leq \|f\|_{M_{(k)}}.
\end{align}

Now we consider the asymptotic behaviour of solutions for the large times.
By Young's inequality, for $ \xi \in \mathbb{R}^n, t>0 $, we estimate
\begin{align}
\nonumber
&\zeta{(\xi)} t Z(t,\xi) \leq \|\zeta{(\xi)} Z(\cdot,\xi) \|_{L^1(0,t)}=\|\mathscr{D}_{(g)} Z(\cdot,\xi) \|_{L^1(0,t)}\\
\nonumber
&\leq \|g\|_{L^1(0,t)}\int_0^t \bigg| \frac{\partial Z(s,\xi)}{\partial s}\bigg| ds= \|g\|_{L^1(0,t)}(1- Z(t,\xi)).
\end{align}
Then 
\begin{equation}
\nonumber
Z(t,\xi)\leq \frac{1}{1+\frac{t\zeta(\xi)}{\|g\|_{L^1(0,t)}}}, t>0 , \xi \in \mathbb{R}^n.
\end{equation}
It follows from (b) of Lemma \ref{property of CMF} and Lebesgue's dominated convergence theorem that 
\begin{align}
\nonumber
&\lim_{t \to \infty}\|p(t,\cdot)\|_{M_{(k)}}=\lim_{t \to \infty} \|(1+\zeta(\cdot)) \tilde{p}(t,\cdot)\|_{L^2(\mathbb{R}^n)}\\
\nonumber
&=\|(1+\zeta(\cdot))\tilde{f}(\cdot) \lim_{t \to \infty}Z(t,\cdot)\|_{L^2(\mathbb{R}^n)}=0.
\end{align}

Since the function $ \hat{g}(s) $ is a Stieltjes function, by Theorem 6.2  in \cite{Schilling}, the function $ s\hat{g}(s) $ is a complete Bernstein function.
It follows from Theorem 7.5  in \cite{Schilling} that the function $ e^{-s\hat{g}(s)} $ is a Stieltjes function.
By the conditions (C3) and (C4), we deduce
\begin{equation}
\nonumber
\lim_{s \to \infty} e^{-s\hat{g}(s)}=0, \lim_{s \to 0} se^{-s\hat{g}(s)}=0. 
\end{equation}
Thus, the function $ e^{-s\hat{g}(s)} $ is a Laplace transform of a completely monotone function. 
Then 
\begin{equation}
\nonumber
\Lambda^{-1}\{\hat{g}(s)e^{-s\hat{g}(s)} \}(t)=\int_0^t g(t-\tau)\Lambda^{-1}\{e^{-s\hat{g}(s)} \}(\tau) d\tau.
\end{equation}
Define the function $ \phi(t,\tau) $ by 
\begin{equation}
\nonumber
\phi(t,\tau)=\Lambda^{-1}\{\hat{g}(s)e^{-\tau s\hat{g}(s)} \}(t), t,\tau>0.
\end{equation}
Since the set of completely monotone functions is closed under multiplication,  the function $ \hat{g}(s)e^{-s\hat{g}(s)} $ is also completely monotone. 
Then it follows from Bernstein's theorem that for $ t,\tau>0,$  $ \phi(t,\tau)>0. $
Using the complex inversion formula, for $ t>0 $, we have
\begin{equation}
\nonumber
\int_0^\infty \phi(t,\tau) d\tau= \int_0^\infty \frac{1}{2\pi i} \int_{r-i\infty}^{r+i\infty} \hat{g}(s)e^{st-\tau s\hat{g}(s)} ds d\tau
=\frac{1}{2\pi i} \int_{r-i\infty}^{r+i\infty} \frac{e^{st}}{s} ds=1,
\end{equation}
where $ r>0 $.
Meanwhile, we deduce
\begin{align}
\nonumber
&\int_0^\infty e^{-st} \int_0^\infty  \phi(t,\tau) e^{-\zeta(\xi) \tau}d\tau dt
=\hat{g}(s)\int_0^\infty   e^{-\tau s\hat{g}(s)} e^{-\zeta(\xi) \tau}d\tau \\
\nonumber
&=\frac{\hat{g}(s)}{s\hat{g}(s)+\zeta(\xi)}=\hat{ Z}(s,\xi), s>0, \xi \in \mathbb{R}^n.
\end{align}
By the uniqueness of the Laplace transform, $  Z(t,\xi) $ has the form:
\begin{equation}
\nonumber
Z(t,\xi)=\int_0^{\infty}  \phi(t,\tau) e^{-\zeta(\xi)\tau}  d\tau, t>0, \xi \in \mathbb{R}^n.
\end{equation}
Since $ \zeta(\xi) $ is negative definite, it follows from Proposition 4.4 in \cite{Schilling} that for $ \tau>0$, $e^{-\tau \zeta(\xi)}$ is positive definite with respect to $ \xi $. 
Thus,  $Z(t,\xi)$  is positive definite with respect to $ \xi $.
Then, by Bochner's theorem \cite[Theorem 3.9.16]{Pinsky}, there exists a finite nonnegative measure $  \mu_t $ on $ \mathbb{R}^n $ such that
\begin{equation}
\nonumber
Z(t,\xi)=\int_{\mathbb{R}^n} e^{-i \xi x}\mu_t (dx)=\tilde{\mu_t}(\xi), t>0, \xi \in \mathbb{R}^n.
\end{equation}
Then
\begin{equation}
\nonumber
\mu_t(\mathbb{R}^n)=\tilde{\mu_t}(0)=Z(t,0)=\int_0^\infty \phi(t,\tau)d\tau=1, t>0.
\end{equation}
The solution of the equation (\ref{diffusion_equation}) with the condition (\ref{initial_condition}) has the form:
\begin{equation}
\nonumber
p(t,x)=\int_{\mathbb{R}^n} f(x-y)1(y)\mu_t(dy).
\end{equation}
Thus, if $ f\geq 0 $, then $ p \geq 0 $.
Moreover, if $f$ is bounded, then we have
\begin{equation}
\nonumber
|p(t,x)|\leq \int_{\mathbb{R}^n} |f(x-y)|1(y)\mu_t(dy) \leq \sup_{x\in \mathbb{R}^n} |f(x)| \mu_t(\mathbb{R}^n)=\sup_{x\in \mathbb{R}^n} |f(x)|.
\end{equation}
\end{proof}

\subsection{Special cases}
In this subsection,  some special cases of the general nonlocal diffusion equation are studied.

\subsubsection{Single-term Caputo-Riesz time-space fractional diffusion equation}
We consider the equation (\ref{Caputo_Riesz_diffusion_equation}) with the initial condition (\ref{initial_condition}).
Using the Fourier-Laplace transform to (\ref{Caputo_Riesz_diffusion_equation}), we have
\begin{equation}
\nonumber
s^{\alpha}\hat{\tilde{p}}(s,\xi)-s^{\alpha-1}\tilde{f}(\xi)=-|\xi|^{2\beta}\hat{\tilde{p}}(s,\xi).
\end{equation} 
Then 
\begin{equation}
\nonumber
\hat{\tilde{p}}(s,\xi)=\frac{s^{\alpha-1}\tilde{f}(\xi)}{s^{\alpha}+|\xi|^{2\beta}}.
\end{equation} 
Taking the inverse Laplace transform, we have
\begin{equation}
\nonumber
\tilde{p}(t,\xi)= \tilde f(\xi) E_{\alpha}(-|\xi|^{2\beta} t^\alpha).
\end{equation}
Applying the Fourier transform, we obtain
\begin{equation}
\nonumber
p(t,x)=\frac{1}{(2\pi)^{\frac{n}{2}}} \int_{\mathbb{R}^n} \tilde f(\xi) E_{\alpha}(-|\xi|^{2\beta} t^\alpha) e^{ix\xi} d\xi.
\end{equation}
Since 
\begin{equation}
\nonumber
 E_{\alpha}(-|\xi|^{2\beta} t^\alpha)\leq 1 , t\geq 0,  \xi\in \mathbb{R}^n,
\end{equation}
 we deduce
\begin{equation}
\nonumber
|\tilde{p}(t,\xi)| \leq |\tilde f(\xi)|.
\end{equation}
It follows from $ f\in M_{(k)} $ that $ p(t,\cdot) \in M_{(k)}$, where $ k(x)=c^{-1}|x|^{-(n+2\beta)} $.

\subsubsection{Multi-term Caputo-Riesz time-space fractional diffusion equation}
The equation (\ref{multi_Caputo_Riesz_diffusion_equation}) with the initial condition (\ref{initial_condition}) is studied.
Applying the Fourier transform to (\ref{multi_Caputo_Riesz_diffusion_equation}) and then using Luchko's theorem \cite[Theorem 4.1]{Luchko_Theorem}, we have
\begin{align}
\nonumber
\tilde{p}(t,\xi)&= \tilde f(\xi) \bigg[ 1-\frac{1}{a_1}t^{\alpha_1}\sum_{j=1}^r b_j |\xi|^{2\beta_j}  E_{(\alpha_1-\alpha_2, ...,\alpha_1-\alpha_m,\alpha_1),1+\alpha_1}\\
\nonumber
&\bigg(-\frac{a_2}{a_1} t^{\alpha_1-\alpha_2},...,-\frac{a_m}{a_1} t^{\alpha_1-\alpha_m},-\frac{1}{a_1}t^{\alpha_1}\sum_{j=1}^rb_j|\xi|^{2\beta_j}\bigg)\bigg].
\end{align}
Taking the inverse Fourier transform, we obtain the solution of the form
\begin{align}
\nonumber
p(t,x)=&\frac{1}{(2\pi)^{\frac{n}{2}}} \int_{\mathbb{R}^n} \tilde f(\xi) \bigg[ 1-\frac{1}{a_1}t^{\alpha_1} 
\sum_{j=1}^rb_j |\xi|^{2\beta_j} E_{(\alpha_1-\alpha_2, ...,\alpha_1-\alpha_m,\alpha_1),1+\alpha_1}\\
\nonumber
&\bigg(-\frac{a_2}{a_1} t^{\alpha_1-\alpha_2},...,-\frac{a_m}{a_1} t^{\alpha_1-\alpha_m},-\frac{1}{a_1}t^{\alpha_1}\sum_{j=1}^rb_j |\xi|^{2\beta_j}\bigg)\bigg]e^{ix\xi} d\xi.
\end{align}
It follows from  Lemma 3.2 in \cite{Li_Liu_Yamamoto} that for any $ t>0 $, there exists a $ M_t>0 $ such that for any $ \xi \in \mathbb{R}^n $
\begin{align}
\nonumber
&\frac{1}{a_1}t^{\alpha_1}\sum_{j=1}^rb_j |\xi|^{2\beta_j}E_{(\alpha_1-\alpha_2, ...,\alpha_1-\alpha_m,\alpha_1),1+\alpha_1}\bigg(-\frac{a_2}{a_1} t^{\alpha_1-\alpha_2},...,-\frac{a_m}{a_1} t^{\alpha_1-\alpha_m},\\
\nonumber
&-\frac{1}{a_1}t^{\alpha_1}\sum_{j=1}^rb_j|\xi|^{2\beta_j}\bigg)<M_t.
\end{align}
Then
\begin{equation}
\nonumber
|\tilde{p}(t,\xi)|\leq (1+M_t)|\tilde f(\xi)|.
\end{equation}
It follows from $ f\in M_{(k)} $ that $ p(t,\cdot) \in M_{(k)}$, where $ k(x)=\sum_{j=1}^r \frac{b_j}{c}|x|^{-(n+2\beta_j)}$.

\subsubsection{Tempered fractional diffusion equation  involving fractional Laplacian}
The equation (\ref{tempered_Riesz_diffusion_equation}) with the initial condition (\ref{initial_condition}) is discussed.
Applying the Fourier transform to (\ref{tempered_Riesz_diffusion_equation}), we have
\begin{equation}
\label{fourier_tempered_equation}
\frac{1}{\Gamma(1-\alpha)}\frac{\partial}{\partial t} \int_0^t e^{-b(t-\tau)}(t-\tau)^{-\alpha} \tilde{p}(\tau,\xi)d\tau-e^{-bt} \frac{t^{-\alpha}}{\Gamma(1-\alpha)}\tilde{f}(\xi)=-|\xi|^{2\beta} \tilde{p}(t,\xi).
\end{equation}
Taking the Laplace transform to (\ref{fourier_tempered_equation}), we obtain
\begin{equation}
\label{laplace_fourier_tempered_equation}
 (s+b)^{\alpha-1} (s\hat{\tilde{p}}(s,\xi)-\tilde f(\xi))=-|\xi|^{2\beta} \hat{\tilde{p}}(s,\xi).
\end{equation}
Then
\begin{equation}
\label{laplace_fourier_tempered_equation}
\hat{\tilde{p}}(s,\xi)=\frac{\tilde{f}(\xi)}{s+|\xi|^{2\beta}(s+b)^{1-\alpha}}.
\end{equation}
Moreover, 
\begin{equation}
\label{laplace_fourier_tempered_equation}
\hat{\tilde{p}}(s-b,\xi)=\frac{\tilde{f}(\xi)}{s+|\xi|^{2\beta}s^{1-\alpha}-b}.
\end{equation}
For $ s \in \big\{s\in R: \big | \frac{b}{s+|\xi|^{2\beta}s^{1-\alpha}} \big |<1 \big\} $, we have
\begin{align}
&\frac{1}{s+|\xi|^{2\beta}s^{1-\alpha}-b }=\frac{1}{s+|\xi|^{2\beta}s^{1-\alpha}} \frac{1}{1-\frac{b}{s+\xi^{2\beta}s^{1-\alpha}}} =\frac{1}{s+|\xi|^{2\beta}s^{1-\alpha}}  \nonumber\\
&\sum_{j=0}^{\infty} \frac{b^j}{\big(s+|\xi|^{2\beta}s^{1-\alpha}\big)^j}=\sum_{j=0}^{\infty} \frac{b^j}{\big(s+|\xi|^{2\beta}s^{1-\alpha}\big)^{j+1}}
=\sum_{j=0}^{\infty}  \frac{b^js^{\alpha-(1+j-j\alpha)}}{\big(s^{\alpha}+|\xi|^{2\beta}\big)^{j+1}}.
\end{align}
By formula (1.80) in \cite{Podlubny}:
\begin{equation}
\nonumber
{\Lambda}\big\{t^{\tau j+\gamma-1} E_{\tau,\gamma}^{(j)}(\pm at^\tau) \big\}(s)=\frac{j!s^{\tau-\gamma}}{(s^\tau\mp a)^{j+1}}, s>|a|^{\frac{1}{\tau}},
\end{equation}
where $ E_{\tau,\gamma}^{(j)}(t) $ denotes the $j$th derivative of the two-parameter Mittag-Leffler function
$ E_{\tau,\gamma}(t) $,
we obtain
\begin{equation}
\label{laplace_fourier_tempered_equation}
{\Lambda}^{-1}\bigg\{\frac{1}{s+|\xi|^{2\beta}s^{1-\alpha}-b}\bigg\}(t)=\sum_{j=0}^{\infty} \frac{b^j}{j!}
t^j E_{\alpha,1+j-j\alpha}^{(j)}(-|\xi|^{2\beta}t^{\alpha}).
\end{equation}
By the frequency shifting property of the Laplace transform,  we have
\begin{align}
\nonumber
&e^{bt}\tilde{p}(t,\xi)=\Lambda^{-1}\big\{\hat{\tilde{p}}(s-b,\xi)\big\}(t,\xi)=\tilde{f}(\xi){\Lambda}^{-1}\bigg\{\frac{1}{s+|\xi|^{2\beta}s^{1-\alpha}-b}\bigg\}(t,\xi)\\
\nonumber
&=\tilde{f}(\xi) \sum_{j=0}^{\infty} \frac{b^j}{j!} t^j E_{\alpha,1+j-j\alpha}^{(j)}(-|\xi|^{2\beta}t^{\alpha}).
\end{align}
Using the inverse Fourier transform, we deduce
\begin{align}
\nonumber
&p(t,x)=\frac{1}{(2\pi)^{\frac{n}{2}}}\int_{\mathbb{R}^n} e^{-bt}\tilde{f}(\xi) \sum_{j=0}^{\infty} \frac{b^j}{j!} t^j E_{\alpha,1+j-j\alpha}^{(j)}(-|\xi|^{2\beta}t^{\alpha}) e^{i x \xi} d\xi\\
\nonumber
&= \frac{e^{-bt}}{(2\pi)^{\frac{n}{2}}} \sum_{j=0}^{\infty} \frac{b^j}{j!} t^j \int_{\mathbb{R}^n}\tilde{f}(\xi)  E_{\alpha,1+j-j\alpha}^{(j)}(-|\xi|^{2\beta}t^{\alpha}) e^{i x \xi} d\xi.
\end{align}
The function $  E_{\alpha,1+j-j\alpha}(-t) $ is completely monotone and thus for any $ j=0,1,... $, the function
\begin{equation}
\nonumber
 |E_{\alpha,1+j-j\alpha}^{(j)}(-t)|
\end{equation}
is monotone decreasing.
For any $ j=0,1,... $, we estimate
\begin{equation}
\nonumber
 |E_{\alpha,1+j-j\alpha}^{(j)}(-t)|\leq 1. 
\end{equation}
Then
\begin{equation}
\nonumber
|\tilde{p}(t,\xi)|\leq|\tilde f(\xi)|e^{-bt} \sum_{j=0}^{\infty}  \frac{b^j}{j!} t^j= |\tilde f(\xi)|.
\end{equation}
It follows from $ f\in M_{(k)} $ that $ p(t,\cdot) \in M_{(k)}$, where $ k(x)=c^{-1}|x|^{-(n+2\beta)} $.

\subsubsection{Tempered fractional diffusion equation involving general Laplacian}
 The equation (\ref{tempered_time_space_diffusion_equation}) with the initial condition (\ref{initial_condition}) is investigated.
Using the Fourier transform to (\ref{tempered_time_space_diffusion_equation}), we obtain
\begin{equation}
\label{fourier_tempered_general_equation}
\frac{1}{\Gamma(1-\alpha)}\frac{\partial}{\partial t} \int_0^t e^{-b(t-\tau)}(t-\tau)^{-\alpha} \tilde{p}(\tau,\xi)d\tau-e^{-bt} \frac{t^{-\alpha}}{\Gamma(1-\alpha)}\tilde{f}(\xi)=-\zeta(\xi) \tilde{p}(t,\xi).
\end{equation}
Applying the Laplace transform to (\ref{fourier_tempered_general_equation}), we have
\begin{equation}
\label{laplace_fourier_tempered_equation}
 (s+b)^{\alpha-1} (s\hat{\tilde{p}}(s,\xi)-\tilde f(\xi))=-\zeta(\xi) \hat{\tilde{p}}(s,\xi).
\end{equation}
Then 
\begin{equation}
\label{laplace_fourier_tempered_equation}
\hat{\tilde{p}}(s,\xi)=\frac{\tilde{f}(\xi)}{s+\zeta(\xi)(s+b)^{1-\alpha}}.
\end{equation}
As in Subsubsection 3.3.3, we use the inverse Fourier-Laplace transform to obtain
\begin{equation}
\nonumber
p(t,x)=\frac{e^{-bt}}{(2\pi)^{\frac{n}{2}}} \sum_{j=0}^{\infty} \frac{b^j}{j!} t^j \int_{\mathbb{R}}\tilde{f}(\xi)  E_{\alpha,1+j-j\alpha}^{(j)}(-\zeta(\xi)t^{\alpha}) e^{i x \xi} d\xi.
\end{equation}
By the complete monotonicity of $  E_{\alpha,1+j-j\alpha}(-t) $, we estimate
\begin{equation}
\nonumber
|\tilde{p}(t,\xi)|\leq|\tilde f(\xi)|.
\end{equation}
Then it follows from $ f\in M_{(k)} $ that $ p(t,\cdot) \in M_{(k)}$, where $ k(x)=q|x|^{-(n+2\beta)}e^{-h|x|} $.

\section{IBVP of general fractional diffusion equation}
\label{Section 4}
\setcounter{section}{4}
\setcounter{equation}{0}\setcounter{theorem}{0}
In this section, we consider the following general diffusion equation
\begin{equation}
\label{governing_equation_bounded}
\mathscr{D}_{(g)} p(t,x) =\mathscr{L}_{(k)} p(t,x), t>0, x \in B
\end{equation}
with the initial condition
\begin{equation}
\label{initial_condition_1}
p(0,x)=f(x), x \in B
\end{equation}
and the boundary condition
\begin{equation}
\label{boundary_condition}
p(t,x)=0, t>0, x \in B^C,
\end{equation}
where $ B \subset \mathbb{R}^n $ is an open bounded domain with smooth boundary and $ B^C:=\mathbb{R}^n \setminus B$.
Denote $ H:=\max\limits_{x \in B} |x| $.

In order to obtain the desired result, throughout this section, we assume that the kernel function $k$ satisfies the following condition:\\
(C7) There exist $ \theta>0, 0<\beta< 1 $ such that $ k(x)\geq \theta |x|^{-(n+2\beta)} $ for $ x \in \Omega=[-2H,2H]^n \setminus \{0\} $.\\
It is obvious that the condition (C7) is weaker than the condition (C5). 
We note that the exponentially truncated kernel function $ k $ defined by formula (\ref{typical_general_exam}) satisfies the condition (C7).
With the help of the kernel function $k$, we define the new kernel function $ k^* $ as follows:
\begin{equation}
\nonumber
k^*(x)=\left\{
                \begin{aligned}
                  & k(x) && \text{for $x \in \Omega, $} \\
                  & \theta |x|^{-(n+2\beta)} && \text{for else}.\\
                \end{aligned}
              \right.
\end{equation}
We can easily see that the general  Laplacian $ \mathscr{L}_{( k^*)}$ is a general fractional Laplacian proposed in \cite{Bisci}. 
Now we consider the following equation
\begin{equation}
\label{governing_equation_bounded_changed}
\mathscr{D}_{(g)} p(t,x) =\mathscr{L}_{(k^*)} p(t,x), t>0, x \in B.
\end{equation}
\begin{lemma}
\label{equivalence_lemma}
$p$ is a solution of the equation (\ref{governing_equation_bounded_changed})  with the conditions (\ref{initial_condition_1}) and (\ref{boundary_condition}) if and only if it is a solution of the equation (\ref{governing_equation_bounded})  with the conditions (\ref{initial_condition_1}) and (\ref{boundary_condition}). 
\end{lemma}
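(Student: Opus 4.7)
The plan is to reduce the lemma to the single spatial identity
\[
\mathscr{L}_{(k)}p(t,x) \;=\; \mathscr{L}_{(k^*)}p(t,x), \qquad x \in B,\; t>0,
\]
for every $p$ that satisfies the exterior condition (\ref{boundary_condition}). Since $\mathscr{D}_{(g)}$, the initial datum (\ref{initial_condition_1}), and the exterior condition itself are shared by both problems, this pointwise coincidence of the spatial operators on admissible functions yields both implications at once and the lemma collapses to that one analytic identity.

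To derive the identity, first I would split each defining integral along $\mathbb{R}^n = \Omega \cup (\mathbb{R}^n \setminus \Omega)$. The portion over $\Omega$ agrees for the two operators trivially, because $k \equiv k^*$ on $\Omega$ by the very construction of $k^*$. The geometric key is the tail: for any $x \in B$ (so $|x|_2 \leq H$) and any $y \notin \Omega$, one has $|y|_\infty > 2H$, hence $|y|_2 \geq |y|_\infty > 2H$, and the triangle inequality then forces $|x \pm y|_2 > H$. Since $B$ lies inside the closed $H$-ball about the origin, both $x+y$ and $x-y$ fall into $B^C$, so (\ref{boundary_condition}) annihilates the values $p(t, x+y)$ and $p(t, x-y)$ appearing in the integrand. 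The tail integrand of each operator therefore reduces to $-2p(t,x)$ times the respective kernel, and integrability of both tails is immediate from $\min\{|y|^2, 1\}k(y) \in L^1(\mathbb{R}^n)$ (Definition \ref{definition_general_laplacian}) together with the explicit form $k^*(y) = \theta|y|^{-(n+2\beta)}$ outside $\Omega$.

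The main obstacle I foresee is the careful bookkeeping at this tail stage, namely reconciling the contributions $-2p(t,x)\int_{\mathbb{R}^n\setminus\Omega} k(y)\,dy$ and $-2p(t,x)\int_{\mathbb{R}^n\setminus\Omega} k^*(y)\,dy$, so that the pointwise equality of the two spatial operators on functions supported in $\bar B$ actually holds. Once this matching is justified---using only the structure of $k^*$ and the fact that $p$ vanishes outside $B$---the $\Omega$-piece and the tail-piece combine to give the desired pointwise identity, and the equivalence of solutions to (\ref{governing_equation_bounded}) and (\ref{governing_equation_bounded_changed}) then follows by direct substitution into either equation, with no additional arguments about $\mathscr{D}_{(g)}$ or about the initial and exterior conditions.
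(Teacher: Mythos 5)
Your decomposition is exactly the one the paper uses: split each integral at $\Omega=[-2H,2H]^n\setminus\{0\}$, use $k\equiv k^*$ on $\Omega$ for the near piece, and use the geometric observation that $x\in B$, $y\notin\Omega$ imply $|y|>2H$, hence $|x\pm y|>H$ and $p(t,x+y)=p(t,x-y)=0$ by (\ref{boundary_condition}). But the ``obstacle'' you flag at the tail stage is not bookkeeping --- it is a genuine gap, and your proposal does not close it. After the translated terms vanish, the tail of $\mathscr{L}_{(k)}p$ contributes $-2p(t,x)\int_{\mathbb{R}^n\setminus\Omega}k(y)\,dy$ while the tail of $\mathscr{L}_{(k^*)}p$ contributes $-2p(t,x)\,\theta\int_{\mathbb{R}^n\setminus\Omega}|y|^{-(n+2\beta)}\,dy$, and nothing in the construction of $k^*$ forces these two finite masses to coincide. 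Hence
$\mathscr{L}_{(k)}p(t,x)-\mathscr{L}_{(k^*)}p(t,x)=-2Cp(t,x)$ with
$C=\int_{\mathbb{R}^n\setminus\Omega}\bigl(k(y)-\theta|y|^{-(n+2\beta)}\bigr)\,dy$,
and $C\neq 0$ for a generic admissible $k$ (e.g.\ for the exponentially truncated kernel (\ref{typical_general_exam}) with $\theta=qe^{-2hH}$ one gets $C<0$). So the pointwise identity to which you reduce the lemma is simply false in general: on functions vanishing outside $B$ the two operators agree only modulo a bounded zeroth-order term. Writing ``once this matching is justified'' defers precisely the step that cannot be carried out as stated.

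For comparison, the paper's own proof takes the identical route and resolves the issue by silently replacing $\int_{\mathbb{R}^n}$ with $\int_{\Omega}$ in both operators, i.e.\ by discarding both tail contributions $-2p(t,x)\int_{\mathbb{R}^n\setminus\Omega}k\,dy$ and $-2p(t,x)\int_{\mathbb{R}^n\setminus\Omega}k^*\,dy$ without comment; it therefore contains the same gap you isolated, so your instinct about where the difficulty sits is exactly right. A correct repair would either renormalize the constant in the definition of $k^*$ off $\Omega$ so that $\int_{\mathbb{R}^n\setminus\Omega}k^*=\int_{\mathbb{R}^n\setminus\Omega}k$ (this preserves a lower bound of the form required by (C5) and makes your tail matching literally true), or keep $k^*$ as defined and carry the extra term $-2Cp$ through the analysis as a bounded perturbation of the equation.
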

\begin{proof}
Suppose that $p$ is a solution of the equation (\ref{governing_equation_bounded_changed})  with the conditions (\ref{initial_condition_1}) and (\ref{boundary_condition}).  If $ x \in B $ and $ |y|>2H $, then $|x+y|> H $ and $ |x-y|>H $. 
Then, for $t>0$ and $x \in B $, we have
\begin{align}
\nonumber
\mathscr{L}_{(k^*)} p(t,x)&=\int_{\mathbb{R}^n} \big(p(t,x+y)+p(t,x-y)-2p(t,x)\big)k^*(y)dy\\
\nonumber
&=\int_{\Omega} \big(p(t,x+y)+p(t,x-y)-2p(t,x)\big)k^*(y)dy\\
\nonumber
&=\int_{\Omega} \big(p(t,x+y)+p(t,x-y)-2p(t,x)\big)k(y)dy\\
\nonumber
&=\int_{\mathbb{R}^n} \big(p(t,x+y)+p(t,x-y)-2p(t,x)\big)k(y)dy=\mathscr{L}_{(k)} p(t,x),
\end{align}
which yields that  $p$  is also a solution of the equation (\ref{governing_equation_bounded})  with the conditions (\ref{initial_condition_1}) and (\ref{boundary_condition}). 

Similarly, we can prove the other direction of the lemma.
\end{proof}

Lemma \ref{equivalence_lemma} shows that the condition (C7) is equivalent to the condition (C5) when solving the IBVP 
(\ref{governing_equation_bounded})-(\ref{boundary_condition}).
From now on, we suppose  that the kernel function $ k $ satisfies the condition (C5).

\subsection{Friedrichs extension of general fractional Laplacian}
In this subsection, based on the results in \cite{Bisci}, the Friedrichs extension of the general fractional Laplacian $ \mathscr{L}_{(k)}$ is discussed. 

For convenience of notation, we denote $ Q:=(\mathbb{R}^n \times \mathbb{R}^n)\setminus (B^C \times B^C) $.
Firstly, we recall concepts and properties of the functional spaces $ X^\beta(B) $ and  $X_0^\beta(B) $,  which are very crucial in our study.
\begin{definition}[\cite{Bisci}]
\label{definition_of_X}
The functional space $ X^\beta(B) $ is the linear space of  Lebesgue measurable functions from $ \mathbb{R}^n $ to $ \mathbb{R} $ such that the restriction to $ B $ of any function $ v \in X^\beta(B)  $ belongs to $ L^2(B) $ and the map
$ (x,y) \rightarrow (v(x)-v(y))\sqrt{k(x-y)} $ is in $ L^2(Q, dxdy) $.
The norm in $X^\beta(B) $ is defined as follows:
\begin{equation}
\nonumber
\|v\|_{X^\beta(B)}=\|v\|_{L^2(B)}+\bigg(\int_Q|v(x)-v(y)|^2 k(x-y)dxdy\bigg)^{1/2}.
\end{equation}
\end{definition}

\begin{definition}[\cite{Bisci}]
The functional space $ X_0^\beta(B) $  is defined by
\begin{equation}
X_0^\beta(B)=\{u\in X^\beta(B):u=0 \text{ a.e.in } B^C \}.
\end{equation}
\end{definition}

\begin{lemma}[Lemma 1.29 in \cite{Bisci}]
\label{Hilbert}
$X_0^\beta(B) $ is a Hilbert space with the scalar product
\begin{equation}
\nonumber
\langle u,v \rangle_{X_0^\beta}=\int_{\mathbb{R}^{2n}} (u(x)-u(y))(v(x)-v(y))k(x-y)dxdy.
\end{equation}
\end{lemma}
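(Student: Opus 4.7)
The plan is to show that $\langle\cdot,\cdot\rangle_{X_0^\beta}$ defines an inner product whose induced norm is equivalent (on $X_0^\beta(B)$) to the restriction of $\|\cdot\|_{X^\beta(B)}$, and then use that equivalence to transport completeness from $X^\beta(B)$ to $X_0^\beta(B)$. The verification splits naturally into three parts: algebraic properties of the form, positive definiteness (which is the substantial point), and completeness of the space.

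First I would check that $\langle \cdot,\cdot\rangle_{X_0^\beta}$ is a well-defined symmetric bilinear form on $X_0^\beta(B)$. Bilinearity is immediate by linearity of the integrand in $u$ and $v$ separately, and symmetry in $(u,v)$ follows by inspection. Note that for $u,v\in X_0^\beta(B)$ the integrand vanishes on $B^C\times B^C$, so integrating over $\mathbb{R}^{2n}$ is the same as integrating over $Q$, which makes $\langle u,v\rangle_{X_0^\beta}$ finite by the Cauchy–Schwarz inequality combined with Definition \ref{definition_of_X}. The form is clearly positive semi-definite since $\langle u,u\rangle_{X_0^\beta}=\int_{\mathbb{R}^{2n}}(u(x)-u(y))^2 k(x-y)\,dx\,dy\geq 0$.

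The main obstacle is positive definiteness, and it is handled by a Poincaré-type inequality that will also be used for completeness. Here I would invoke assumption (C5): since $k(z)\geq \theta|z|^{-(n+2\beta)}$ on $\mathbb{R}^n\setminus\{0\}$, one has
\begin{equation}
\nonumber
\langle u,u\rangle_{X_0^\beta}\geq \theta\int_{\mathbb{R}^{2n}}\frac{(u(x)-u(y))^2}{|x-y|^{n+2\beta}}\,dx\,dy=\theta[u]_{H^\beta(\mathbb{R}^n)}^2.
\end{equation}
For $u\in X_0^\beta(B)$ the function $u$ vanishes on $B^C$, so the classical fractional Poincaré inequality on the bounded domain $B$ yields $\|u\|_{L^2(B)}\leq C_B[u]_{H^\beta(\mathbb{R}^n)}$. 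Chaining these inequalities gives $\|u\|_{L^2(B)}\leq C\sqrt{\langle u,u\rangle_{X_0^\beta}}$, from which $\langle u,u\rangle_{X_0^\beta}=0$ forces $u=0$ in $L^2(B)$, hence (with the vanishing on $B^C$) $u=0$ a.e. in $\mathbb{R}^n$. The same chain shows that $\|\cdot\|_{X_0^\beta}:=\langle\cdot,\cdot\rangle_{X_0^\beta}^{1/2}$ is equivalent to the full norm $\|\cdot\|_{X^\beta(B)}$ restricted to $X_0^\beta(B)$.

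For completeness, let $\{u_j\}\subset X_0^\beta(B)$ be Cauchy with respect to $\|\cdot\|_{X_0^\beta}$. By the Poincaré step above, $\{u_j\}$ is Cauchy in $L^2(B)$, so there exists $u\in L^2(B)$ with $u_j\to u$ in $L^2(B)$; extend $u$ by zero to $B^C$. Simultaneously, the maps $\Phi_j(x,y):=(u_j(x)-u_j(y))\sqrt{k(x-y)}$ form a Cauchy sequence in $L^2(Q,dx\,dy)$, so they converge to some $W\in L^2(Q)$. Passing to a subsequence, I may assume $u_{j_m}\to u$ pointwise a.e.\ on $\mathbb{R}^n$ and $\Phi_{j_m}\to W$ pointwise a.e.\ on $Q$; identifying the two pointwise limits yields $W(x,y)=(u(x)-u(y))\sqrt{k(x-y)}$ a.e. This shows $u\in X^\beta(B)$ and, because $u=0$ a.e.\ on $B^C$, actually $u\in X_0^\beta(B)$, with $u_j\to u$ in $X_0^\beta(B)$. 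The step I expect to be most delicate is the positive-definiteness/Poincaré argument, because it is the one place where assumption (C5) is genuinely used and where one must pass from the purely nonlocal seminorm to an honest norm on $X_0^\beta(B)$; once that is established, the remaining bilinearity, symmetry, and completeness assertions are standard functional-analytic consequences.
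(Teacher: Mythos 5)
Your proof is correct, but note that the paper offers no proof of this statement at all: it is quoted verbatim as Lemma 1.29 of \cite{Bisci}, and your argument is essentially the standard one given in that reference. The one substantive step --- the Poincar\'e-type bound obtained from (C5) that upgrades the seminorm to a norm equivalent to $\|\cdot\|_{X^\beta(B)}$ on $X_0^\beta(B)$ --- is exactly the content of Lemma \ref{norm_inequality} (Lemma 1.28 in \cite{Bisci}), which the paper already records and which you could have cited instead of re-deriving; the bilinearity, symmetry, and completeness parts are routine as you say.
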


\begin{lemma}[Theorem 2.6 in \cite{Bisci}]
\label{density_of_C_0_infinity}
The functional space $ C_0^\infty(B)  $ is dense in $ X_0^\beta(B) $.
\end{lemma}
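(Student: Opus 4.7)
The plan is the classical two-step density argument adapted to the non-standard kernel $k$ that defines the seminorm of $X_0^\beta(B)$: first truncate $u$ so its support lies strictly inside $B$, then mollify. Given $u \in X_0^\beta(B)$, extend it by zero to $\mathbb{R}^n$. For each small $\delta>0$ I would pick a cutoff $\eta_\delta \in C_c^\infty(B)$ with $\eta_\delta\equiv 1$ on $B_\delta:=\{x\in B : \mathrm{dist}(x,\partial B)>\delta\}$, $0\le \eta_\delta\le 1$, and $|\nabla \eta_\delta|\le C/\delta$, and set $u_\delta:=\eta_\delta u$. I would then show $u_\delta \to u$ in $X_0^\beta(B)$: convergence in $L^2(B)$ is immediate by dominated convergence, and for the seminorm part the key decomposition is
\begin{equation*}
u_\delta(x)-u_\delta(y)-u(x)+u(y) = (\eta_\delta(x)-1)(u(x)-u(y)) + u(y)(\eta_\delta(x)-\eta_\delta(y)),
\end{equation*}
where the first term vanishes pointwise a.e.\ and is dominated by $2|u(x)-u(y)|$, so its contribution to $\int_Q|\cdot|^2 k(x-y)\,dx\,dy$ vanishes by dominated convergence.

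Once $u_\delta$ has compact support $K\Subset B$, I would mollify with a standard mollifier $\rho_\epsilon$ of radius $\epsilon<\mathrm{dist}(K,\partial B)$, so that $\rho_\epsilon*u_\delta\in C_0^\infty(B)$. Using the identity $(\rho_\epsilon*u_\delta)(x)-(\rho_\epsilon*u_\delta)(y)=\int \rho_\epsilon(z)\,(u_\delta(x-z)-u_\delta(y-z))\,dz$ together with Minkowski's integral inequality on $L^2(Q,k(x-y)\,dx\,dy)$ yields $[\rho_\epsilon*u_\delta]_{X^\beta}\le[u_\delta]_{X^\beta}$. Combining this bound with continuity of translations on $L^2(Q,k(x-y)\,dx\,dy)$ and dominated convergence gives $\rho_\epsilon*u_\delta\to u_\delta$ in $X_0^\beta(B)$ as $\epsilon\to 0^+$; a diagonal extraction then produces a sequence in $C_0^\infty(B)$ converging to $u$.

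The hard part will be the cutoff piece $\int_Q u(y)^2 (\eta_\delta(x)-\eta_\delta(y))^2 k(x-y)\,dx\,dy \to 0$, since $|\nabla \eta_\delta|\sim 1/\delta$ blows up as $\delta\to 0^+$. This is exactly where condition (C5) enters: the lower bound $k(z)\ge \theta |z|^{-n-2\beta}$ embeds the seminorm into the standard Gagliardo seminorm $[\,\cdot\,]_{H^\beta}$, and a fractional Hardy inequality on $B$ (applicable because $u$ vanishes outside $B$) absorbs the Lipschitz blow-up of $\eta_\delta$ into the finite boundary energy of $u$. Without an estimate of the form (C5), the kernel $k$ could be too concentrated near the origin to sustain such a Hardy bound, and the cutoff step would fail; so the whole proof hinges on quantitatively exploiting (C5) in this single estimate.
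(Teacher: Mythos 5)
The paper offers no proof of this lemma at all: it is imported verbatim as Theorem~2.6 of the cited book (the density theorem of Fiscella, Servadei and Valdinoci), so your argument has to stand on its own. The truncation--mollification skeleton is reasonable and the mollification half (Minkowski's inequality plus continuity of translations in $L^2(Q,k(x-y)\,dx\,dy)$) is fine, but the truncation half has a genuine gap exactly where you flag the difficulty, and the fix you propose does not work under the stated hypotheses. A preliminary bookkeeping point: on the portion $(B^C\times B)\cup(B\times B^C)$ of $Q$ the two terms of your decomposition do \emph{not} vanish individually --- for $x\in B^C$ one has $u(x)=0$ and $\eta_\delta(x)=0$, so the first term equals $u(y)$ while the second tends to $-u(y)$; only their sum $(1-\eta_\delta(y))u(y)$ tends to zero. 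That region must be treated separately, which is harmless because $\int_{B^C}\int_B u(y)^2k(x-y)\,dy\,dx$ is exactly the part of the $X^\beta(B)$ seminorm of $u$ over $B^C\times B$ and dominated convergence applies.

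The real obstruction is the term $\int_{B\times B}u(y)^2\bigl(\eta_\delta(x)-\eta_\delta(y)\bigr)^2k(x-y)\,dx\,dy$. Your plan is to pair a fractional Hardy weight $\mathrm{dist}(y,\partial B)^{-2\beta}$, obtained from (C5), against the inner integral $\int\bigl(\eta_\delta(x)-\eta_\delta(y)\bigr)^2k(x-y)\,dx$. But (C5) is only a \emph{lower} bound on $k$; the only upper information available is $\min\{|z|^2,1\}k\in L^1(\mathbb{R}^n)$, which yields merely $\int_{|z|>R}k(z)\,dz\lesssim R^{-2}$ for small $R$, not the $R^{-2\beta}$ decay needed to match the Hardy weight. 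Kernels such as $k(z)=\theta|z|^{-n-2\beta}+\sum_j j^{-2}|z|^{-n-2}\chi_{\{2^{-j}<|z|<2^{-j+1}\}}(z)$ satisfy all the standing assumptions yet have tail mass essentially of order $R^{-2}$ along $R=2^{-j}$, so the inner integral is of order $\mathrm{dist}(y,\partial B)^{-2}$ for such $y$, which no fractional Hardy inequality with $\beta<1$ can absorb. In addition, the fractional Hardy inequality you invoke fails for $\beta=1/2$ (a value permitted by (C5)), and applying it to an arbitrary $u$ that merely vanishes a.e.\ outside $B$, rather than to an element of the closure of $C_0^\infty(B)$, skirts circularity with the very density statement being proven. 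The proof in the cited source avoids the shrinking cutoff entirely: it multiplies by a \emph{fixed} partition of unity subordinate to a cover of $\partial B$ (multiplication by a fixed Lipschitz cutoff $\varphi$ is bounded on $X^\beta(B)$ precisely because $\int\min\{\|\nabla\varphi\|_\infty^2|z|^2,4\|\varphi\|_\infty^2\}k(z)\,dz<\infty$), pushes each boundary piece strictly into $B$ by a small inward translation, and only then mollifies. You should either adopt that route or add an upper bound $k(z)\le\Theta|z|^{-(n+2\beta)}$ to your hypotheses, under which (away from $\beta=1/2$) your cutoff estimate can be closed.
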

Now we consider some properties of the operator $ -\mathscr{L}_{(k)} $.

\begin{lemma}[Theorem  1.26 in \cite{Bisci}]
\label{symmetry_of_operator}
The operator $ -\mathscr{L}_{(k)}:C_0^\infty(B) \subset L^2 (B)  \rightarrow L^2 (B) $ is a linear symmetric operator on $L^2 (B)$.
In particular, for $ u, v\in C_0^\infty(B)$, the following equalities hold.
\begin{align}
\nonumber
&\langle -\mathscr{L}_{(k)} u,v \rangle_{L^2 (B)}=\int_{\mathbb{R}^{2n}} \big(u(x+y)+u(x-y)-2u(x)\big)v(x)k(y)dxdy\\
\nonumber
&=\int_{\mathbb{R}^{2n}}(u(x)-u(y))(v(x)-v(y)k(x-y)dxdy=\langle u,v \rangle_{X_0^\beta}\\
\nonumber
&=\int_{\mathbb{R}^{2n}} \big(v(x+y)+v(x-y)-2v(x)\big)u(x)k(y)dxdy=\langle u,-\mathscr{L}_{(k)} v \rangle_{L^2 (B)}.
\end{align}
\end{lemma}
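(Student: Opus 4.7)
The plan is to establish the chain of equalities in the display; linearity is then immediate from the integral form of $\mathscr{L}_{(k)}$, and symmetry drops out of the manifestly self-dual middle expression. Throughout I would work with $u,v\in C_0^\infty(B)$ extended by zero, so that all integrals extend naturally to $\mathbb{R}^n$ or $\mathbb{R}^{2n}$ with integrands supported in $B$, respectively in $Q$.

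First I would verify that $\mathscr{L}_{(k)}u\in L^2(B)$ and that every double integral in the statement converges absolutely. The crucial estimate is the pointwise bound $|u(x+y)+u(x-y)-2u(x)|\leq\min(\|D^2u\|_{L^\infty}|y|^2,\,4\|u\|_{L^\infty})$, which together with the standing assumption $\min(|y|^2,1)k(y)\in L^1(\mathbb{R}^n)$ from Definition \ref{definition_general_laplacian} controls both the singular behaviour of $k$ near the origin and its tail at infinity; the compact support of $u$ and $v$ handles the spatial integration. This justifies the Fubini and change-of-variable steps that follow.

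Next I would pass $\langle -\mathscr{L}_{(k)}u,v\rangle_{L^2(B)}$ to the double integral on the right by Fubini, and then transform it into the symmetric bilinear form. Splitting $u(x+y)+u(x-y)-2u(x)=[u(x+y)-u(x)]+[u(x-y)-u(x)]$ and applying the affine substitutions $z=x+y$ (respectively $z=x-y$) in the two halves puts the expression in the form $\int\int (u(z)-u(x))v(x)\,k(z-x)\,dx\,dz$ up to the intrinsic symmetrization. Because the operator depends on $k$ only through the symmetric combination $\tfrac{1}{2}(k(y)+k(-y))$ (built into the definition of $\mathscr{L}_{(k)}$ by the $u(x+y)+u(x-y)$ pairing), one may freely trade $k(z-x)$ for $k(x-z)$. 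A further change of variables in the $v$-factor then produces the symmetric expression $\int_{\mathbb{R}^{2n}}(u(x)-u(y))(v(x)-v(y))k(x-y)\,dx\,dy$; since $u$ and $v$ both vanish outside $B$ the integrand is supported in $Q$, so by Lemma \ref{Hilbert} this coincides with $\langle u,v\rangle_{X_0^\beta}$.

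From the symmetry of the middle expression in $u$ and $v$, the argument can be run in reverse with the roles of $u$ and $v$ interchanged, yielding the remaining two equalities and in particular $\langle -\mathscr{L}_{(k)}u,v\rangle_{L^2(B)}=\langle u,-\mathscr{L}_{(k)}v\rangle_{L^2(B)}$, which is the asserted symmetry. Linearity follows from the linearity of the defining integral. The main obstacle is the careful bookkeeping in the second step: one must simultaneously control the integrable singularity of $k$ at the origin, interchange the iterated integrals, and perform the affine changes of variable, and in the non-symmetric setting of Definition \ref{definition_general_laplacian} reduce everything to an effectively symmetric kernel via the built-in symmetrization of $\mathscr{L}_{(k)}$.
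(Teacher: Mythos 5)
Your proposal is correct and is essentially the standard argument: the paper itself gives no proof of this lemma, quoting it verbatim as Theorem 1.26 of the cited book, and the computation you outline (absolute convergence via $|u(x+y)+u(x-y)-2u(x)|\leq\min\{\|D^2u\|_{L^\infty}|y|^2,4\|u\|_{L^\infty}\}$, Fubini, the substitutions $z=x\pm y$, and symmetrization in $(x,z)$) is exactly the one used there. Your observation that the non-symmetric kernel is harmless -- because $\mathscr{L}_{(k)}$ only sees $\tfrac12(k(y)+k(-y))$ and the factor $(u(x)-u(y))(v(x)-v(y))$ is symmetric under $x\leftrightarrow y$, so the symmetrized kernel may be replaced by $k$ in the bilinear form -- is the right way to handle the paper's weakened Definition \ref{definition_general_laplacian}. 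One minor point worth recording: carried out with care, the computation yields $\langle -\mathscr{L}_{(k)}u,v\rangle_{L^2(B)}=-\int_{\mathbb{R}^{2n}}(u(x+y)+u(x-y)-2u(x))v(x)k(y)\,dx\,dy=\langle u,v\rangle_{X_0^\beta}$, so the first and last displayed equalities in the statement are missing a minus sign; your signs are the consistent ones.
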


\begin{lemma}[Lemma 1.28 in \cite{Bisci}]
\label{norm_inequality}
There exists a constant $ C>1 $, depending only on $ n, s, \theta $ and $ B $ such that for any $ u \in X_0^\beta(B) $,
\begin{align}
\nonumber
&\int_{Q}|u(x)-u(y)|^2k(x-y)dxdy \leq \|u\|^2_{X^\beta(B)}\\
\nonumber
&\leq C\int_{Q}|u(x)-u(y)|^2k(x-y)dxdy.
\end{align}
\end{lemma}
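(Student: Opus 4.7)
The plan is to handle the two inequalities separately, since the first is trivial while the second requires a fractional Poincar\'e-type estimate.

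First, the lower bound is immediate from the definition of the norm. Writing $[u]_k^2 := \int_Q |u(x)-u(y)|^2 k(x-y)\,dx\,dy$ and $N := \|u\|_{L^2(B)}$, we have $\|u\|_{X^\beta(B)}^2 = (N + [u]_k)^2 = N^2 + 2N[u]_k + [u]_k^2 \geq [u]_k^2$ since $N \geq 0$.

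For the upper bound, the reduction via $(a+b)^2 \leq 2(a^2+b^2)$ gives
\begin{equation*}
\|u\|_{X^\beta(B)}^2 \leq 2\|u\|_{L^2(B)}^2 + 2[u]_k^2,
\end{equation*}
so it suffices to prove a Poincar\'e-type estimate $\|u\|_{L^2(B)}^2 \leq C'[u]_k^2$ with $C' = C'(n,\beta,\theta,B)$. I would exploit two features of the setup. First, $u \in X_0^\beta(B)$ means $u \equiv 0$ a.e.\ on $B^C$, so $\|u\|_{L^2(B)} = \|u\|_{L^2(\mathbb{R}^n)}$, and since $u(x)-u(y)=0$ on $B^C\times B^C$ the integral over $Q$ in $[u]_k^2$ is actually the integral over the whole $\mathbb{R}^{2n}$. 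Second, the lower bound on the kernel from condition (C5) yields
\begin{equation*}
[u]_k^2 \;\geq\; \theta \int_{\mathbb{R}^{2n}} \frac{|u(x)-u(y)|^2}{|x-y|^{n+2\beta}}\,dx\,dy,
\end{equation*}
i.e.\ the weighted seminorm controls $\theta$ times the Gagliardo seminorm of $u$ on $\mathbb{R}^n$.

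Combining these observations with the classical fractional Poincar\'e inequality for functions in $H^\beta(\mathbb{R}^n)$ supported in the bounded set $B$ (which follows, for instance, from the continuous embedding of $H_0^\beta(B)$ into $L^2(B)$, or from Plancherel combined with the fact that such functions have sufficiently regular low-frequency Fourier transforms) gives $\|u\|_{L^2(\mathbb{R}^n)}^2 \leq C_P\, \theta^{-1}[u]_k^2$ for a constant $C_P = C_P(n,\beta,B)$. Choosing $C := 2 + 2C_P/\theta$ then yields the claimed bound. The main obstacle, and really the only non-trivial ingredient, is the fractional Poincar\'e inequality on $B$; everything else is definitional unfolding or elementary inequalities. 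The dependence of $C$ on $n,\beta,\theta,B$ in the statement matches exactly the sources of the constant in this derivation (the embedding constant depends on $n,\beta$, and the diameter $H$ of $B$, and the comparison with the Gagliardo kernel contributes the $\theta^{-1}$ factor).
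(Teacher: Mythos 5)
The paper does not prove this lemma at all: it is quoted verbatim (as Lemma~1.28) from the cited book of Bisci, R\u{a}dulescu and Servadei, so there is no in-paper argument to compare against. Your proof is correct and is in fact essentially the standard argument from that source: the lower bound is definitional, and the upper bound reduces, via $(a+b)^2\le 2(a^2+b^2)$, to a Poincar\'e-type estimate $\|u\|_{L^2(B)}^2\le C'\int_Q|u(x)-u(y)|^2k(x-y)\,dx\,dy$, which you correctly obtain by observing that $u\equiv 0$ on $B^C$ makes the $Q$-integral a full $\mathbb{R}^{2n}$-integral, invoking the kernel lower bound (C5) to dominate $\theta$ times the Gagliardo seminorm, and then applying the fractional Poincar\'e inequality for $H^\beta$-functions vanishing outside the bounded set $B$. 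The only place you lean on an unproved ingredient is that last inequality; it is indeed the crux, and the cleanest self-contained justification (used in the reference) is either the Sobolev embedding $[u]_{H^\beta}\gtrsim\|u\|_{L^{2^*}}\ge |B|^{-\beta/n}\|u\|_{L^2(B)}$ combined with H\"older on the bounded domain, or the direct observation that for $x\in B$ and $y$ ranging over a fixed bounded set of positive measure disjoint from $B$ one has $|u(x)|^2=|u(x)-u(y)|^2$ and $|x-y|^{-n-2\beta}$ bounded below, so integrating in $y$ and then in $x$ gives the estimate. Your accounting of the dependence of $C$ on $n,\beta,\theta,B$ is also consistent with the statement.
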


Therefore, the operator $ -\mathscr{L}_{(k)}:C_0^\infty(B) \subset L^2 (B)  \rightarrow L^2 (B) $ is a strongly monotone operator on $L^2 (B)$.

\begin{theorem}
\label{energetic_space}
The energetic space of the operator $ -\mathscr{L}_{(k)}:C_0^\infty(B) \subset L^2 (B)  \rightarrow L^2 (B) $ is $ X_0^\beta(B) $.
\end{theorem}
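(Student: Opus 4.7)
The plan is to unwind the definition of energetic space and then assemble the three Lemmas \ref{Hilbert}, \ref{density_of_C_0_infinity}, and \ref{symmetry_of_operator}, together with the equivalence of norms in Lemma \ref{norm_inequality}, to conclude. Recall that for a symmetric, strongly monotone linear operator $A:D(A)\subset H\to H$ on a Hilbert space $H$, the energetic space is defined as the completion of $D(A)$ with respect to the energetic inner product $\langle u,v\rangle_E:=\langle Au,v\rangle_H$. So I need to show that this completion, applied to $A=-\mathscr{L}_{(k)}$ with $H=L^2(B)$ and $D(A)=C_0^\infty(B)$, equals $X_0^\beta(B)$.

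First, I would verify that $-\mathscr{L}_{(k)}$ on $C_0^\infty(B)$ is symmetric and strongly monotone, so that the energetic space is well-defined. Symmetry is immediate from Lemma \ref{symmetry_of_operator}. Strong monotonicity follows by combining Lemma \ref{symmetry_of_operator} (which identifies $\langle -\mathscr{L}_{(k)}u,u\rangle_{L^2(B)}$ with $\int_Q|u(x)-u(y)|^2 k(x-y)\,dx\,dy$) with Lemma \ref{norm_inequality}, giving $\langle -\mathscr{L}_{(k)}u,u\rangle_{L^2(B)}\geq C^{-1}\|u\|_{X^\beta(B)}^2\geq C^{-1}\|u\|_{L^2(B)}^2$ for every $u\in C_0^\infty(B)$.

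Next, using Lemma \ref{symmetry_of_operator} again, for $u,v\in C_0^\infty(B)$ the energetic inner product is
\begin{equation}
\nonumber
\langle u,v\rangle_E=\langle -\mathscr{L}_{(k)}u,v\rangle_{L^2(B)}=\int_{\mathbb{R}^{2n}}(u(x)-u(y))(v(x)-v(y))k(x-y)\,dx\,dy=\langle u,v\rangle_{X_0^\beta}.
\end{equation}
Thus the energetic norm on $C_0^\infty(B)$ coincides exactly with the Hilbert space norm on $X_0^\beta(B)$ given in Lemma \ref{Hilbert}. By Lemma \ref{norm_inequality}, this norm is equivalent on $X_0^\beta(B)$ to the full $X^\beta(B)$ norm in Definition \ref{definition_of_X}, so the topology induced on $C_0^\infty(B)$ is unambiguous.

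Finally, since $X_0^\beta(B)$ is complete with respect to $\langle\cdot,\cdot\rangle_{X_0^\beta}$ by Lemma \ref{Hilbert}, and $C_0^\infty(B)$ is dense in $X_0^\beta(B)$ by Lemma \ref{density_of_C_0_infinity}, the abstract completion of $C_0^\infty(B)$ in the energetic norm can be identified canonically with $X_0^\beta(B)$, which is the claim. The only conceptual point to guard against is that the energetic inner product a priori only involves the seminorm part of the $X^\beta(B)$ norm, with no $L^2(B)$ contribution; the main obstacle is therefore to justify that this seminorm is actually an equivalent norm on $X_0^\beta(B)$, but this is precisely the content of Lemma \ref{norm_inequality}, so the argument goes through without further work.
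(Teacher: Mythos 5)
Your proof is correct and follows essentially the same route as the paper: symmetry and strong monotonicity from Lemmas \ref{symmetry_of_operator} and \ref{norm_inequality}, then identification of the energetic completion of $C_0^\infty(B)$ with $X_0^\beta(B)$ via the completeness in Lemma \ref{Hilbert} and the density in Lemma \ref{density_of_C_0_infinity}. You merely spell out in more detail the steps the paper compresses into ``we can easily prove the desired result,'' in particular the observation that the energetic inner product coincides with $\langle\cdot,\cdot\rangle_{X_0^\beta}$ on $C_0^\infty(B)$.
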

\begin{proof}
It follows Lemma \ref{symmetry_of_operator} and Lemma \ref{norm_inequality} that the operator $ -\mathscr{L}_{(k)}:C_0^\infty(B) \subset L^2 (B)  \rightarrow L^2 (B) $ is a linear symmetric, strongly monotone operator on $L^2 (B)$.
Thus, there exists an energetic extension $ -\mathscr{L}_{(k)}^E $ of the operator $ -\mathscr{L}_{(k)} $.
By using Lemma \ref{Hilbert} and Lemma \ref{density_of_C_0_infinity}, we can easily prove the desired result.
\end{proof}

As usual, the Friedrichs extension $ -\mathscr{L}_{(k)}^F $ of the operator $ -\mathscr{L}_{(k)} $ is defined by
\begin{equation}
\nonumber
-\mathscr{L}_{(k)}^F u =-\mathscr{L}_{(k)}^E u \text{ for all } u \in D(-\mathscr{L}_{(k)}^F),
\end{equation}
where $ D(-\mathscr{L}_{(k)}^F)=\{u\in X_0^\beta(B):-\mathscr{L}_{(k)}^E u \in L^2(B)\} $.
This means that $ u \in D(-\mathscr{L}_{(k)}^F)$ iff $ u \in  X_0^\beta(B) $ and there exists a function $ g \in L^2(B) $ such that 
\begin{equation}
\nonumber
\int_{\mathbb{R}^{2n}}(u(x)-u(y))(v(x)-v(y)k(x-y)dxdy=\int_{B}g(x)v(x)dx \text{ for all } v \in X_0^\beta(B).
\end{equation}
The Friedrichs extension $ -\mathscr{L}_{(k)}^F $ of the operator $ -\mathscr{L}_{(k)} $ is self-adjoint, bijective.

\begin{lemma}[Lemma 1.31 in \cite{Bisci}]
\label{compactness_of_X_0}
The embedding $ X_0^\beta(B) \hookrightarrow L^2(B) $ is compact.
\end{lemma}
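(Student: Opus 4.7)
The plan is to reduce this to the classical Rellich--Kondrachov compactness theorem for the fractional Sobolev space $H^\beta$ on a bounded domain, using condition (C5) to compare the $X_0^\beta(B)$ norm with the Gagliardo seminorm of order $\beta$.

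First I would observe that any $u\in X_0^\beta(B)$ vanishes a.e.\ on $B^C$, so
\begin{equation}
\nonumber
\int_Q |u(x)-u(y)|^2 k(x-y)\,dxdy = \int_{\mathbb{R}^{n}\times\mathbb{R}^n} |u(x)-u(y)|^2 k(x-y)\,dxdy,
\end{equation}
since the $B^C\times B^C$ portion contributes nothing. Then condition (C5), $k(x)\geq \theta|x|^{-(n+2\beta)}$, yields the pointwise bound on the kernel and hence
\begin{equation}
\nonumber
[u]_{H^\beta(\mathbb{R}^n)}^2 \;\leq\; \theta^{-1}\int_Q |u(x)-u(y)|^2 k(x-y)\,dxdy.
\end{equation}
Combined with Lemma \ref{norm_inequality} and the definition of $\|\cdot\|_{X^\beta(B)}$, this gives a constant $C'>0$ (depending only on $n,\beta,\theta,B$) such that $\|u\|_{H^\beta(\mathbb{R}^n)}\leq C'\|u\|_{X_0^\beta(B)}$ for every $u\in X_0^\beta(B)$.

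Next, given a bounded sequence $\{u_j\}\subset X_0^\beta(B)$, every $u_j$ is supported in $\overline{B}$ and the sequence is uniformly bounded in $H^\beta(\mathbb{R}^n)$. Choosing any bounded Lipschitz open set $B'$ with $\overline{B}\subset B'$, the restrictions $u_j|_{B'}$ form a bounded sequence in the fractional Sobolev space $H^\beta(B')$. I would then invoke the classical compact embedding $H^\beta(B')\hookrightarrow L^2(B')$ for bounded Lipschitz domains (see e.g.\ the Rellich--Kondrachov type theorem in \cite{Nezza}) to extract a subsequence $u_{j_m}$ converging in $L^2(B')$, and since the supports lie inside $\overline{B}\subset B'$, this subsequence converges in $L^2(B)$ as well.

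The main obstacle is essentially conceptual bookkeeping rather than deep analysis: one must make sure the bound on the nonlocal seminorm over $Q$ really controls the full $\mathbb{R}^{2n}$ Gagliardo seminorm (which is where the vanishing outside $B$ is crucial), and that the resulting $H^\beta(\mathbb{R}^n)$ bound transfers to an $H^\beta(B')$ bound on some enlarged bounded domain where Rellich--Kondrachov applies. Once these two observations are in place, the compactness is inherited directly from the fractional Sobolev setting, and no new compactness argument is needed.
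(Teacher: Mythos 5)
Your argument is correct and is essentially the same as the proof behind the cited result: the paper offers no proof of its own, simply quoting Lemma 1.31 of Bisci--Radulescu--Servadei, and the proof there (going back to Servadei--Valdinoci) is precisely your reduction --- use (C5) together with the vanishing of $u$ on $B^C$ to obtain a continuous embedding $X_0^\beta(B)\hookrightarrow H^\beta(\mathbb{R}^n)$, then apply the fractional Rellich--Kondrachov theorem on a bounded Lipschitz set containing $\overline{B}$. Nothing further is needed.
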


The eigenvalue problem of the Friedrichs extension $ -\mathscr{L}_{(k)}^F $ is equivalent to the problem that is to find a solution $ u \in X_0^\beta(B), \lambda\in \mathbb{R} $ of the following equation
\begin{equation}
\label{eigenvalue_problem}
\int_{\mathbb{R}^{2n}}(u(x)-u(y))(v(x)-v(y)k(x-y)dxdy=\lambda \int_{B}u(x)v(x)dx \text{ for all } v \in C_0^\infty(B).
\end{equation}
The following results for the eigensolutions of the Friedrichs extension  $ -\mathscr{L}_{(k)}^F $ are standard in references of functional analysis (for example, see \cite{Zeidler}).\\
(P1) The operator $ -\mathscr{L}_{(k)}^F $ has a countable system $\{\psi_j, \lambda_j\}$ of eigensolutions that contain all the eigensolutions of $ -\mathscr{L}_{(k)}^F $.\\
(P2) The eigenvectors $ \{\psi_j\} $ form a complete orthonormal system in $ L^2(B) $. In addition, $ \psi_j \in X_0^\beta(B) $ for all $ j\in N $.\\
(P3) All the eigenvalues $ \lambda_j $ have finite multiplicity. Furthermore,
$ 0<\lambda_1\leq \lambda_2 \leq \cdots $ and $ \lambda_j \rightarrow +\infty $ as $ j \rightarrow \infty $.\\
(P4)
\begin{align}
\nonumber
&D(-\mathscr{L}_{(k)}^F)=\bigg\{u\in L^2(B): \sum_{j=1}^\infty |\lambda_j \langle \psi_j, u\rangle_{L_2(B)}|^2 <\infty \bigg\}.\\
\label{eigensolution_expansion}
&\text{ For any } u \in D(-\mathscr{L}_{(k)}^F), -\mathscr{L}_{(k)}^F u=\sum_{j=1}^\infty \lambda_j \langle \psi_j, u\rangle_{L_2(B)} \psi_j.
\end{align}
In particular, $ D(-\mathscr{L}_{(k)}^F) $ is a Hilbert space with the norm
\begin{equation}
\label{L_K_F_norm}
\|u\|_{D(-\mathscr{L}_{(k)}^F)}=\bigg(\sum_{j=1}^\infty |\lambda_j \langle \psi_j, u\rangle_{L_2(B)}|^2\bigg)^{1/2}.
\end{equation}
For convenience of notation, from now on, $-\mathscr{L}_{(k)}$  will mean the Friedrichs extension $-\mathscr{L}_{(k)}^F$.

\subsection{Existence and properties of solutions}

We use the eigenfunction expansion method to solve the initial boundary value problem.
Let
\begin{equation}
\label{solution_form_1}
p(t,x)=\sum_{j=1}^\infty \omega_j(t) \psi_j(x)
\end{equation}
be the solution of the general diffusion equation (\ref{governing_equation_bounded}) with the conditions
 (\ref{initial_condition_1}) and (\ref{boundary_condition}).
By substituting (\ref{solution_form_1}) into (\ref{governing_equation_bounded}), for any $ j \in N $,
we have 
\begin{align}
\nonumber
&D_{(g)}\omega_j(t)+\lambda_j \omega_j(t)=0, t>0,\\
\nonumber
&\omega_j(0)=\langle \psi_j, f \rangle_{L^2(B)},
\end{align}
which, by Lemma \ref{property_of_general_Caputo}, has a unique solution $ \omega_j(t)$ in $ C[0, \infty) $.
In particular, the solution $ \omega_j(t)$ is a completely monotone function.
\begin{theorem}
Let $ f \in D(-\mathscr{L}_{(k)}) $. Then the general diffusion equation (\ref{governing_equation_bounded}) with the conditions
 (\ref{initial_condition_1}) and (\ref{boundary_condition}) has a unique solution $ p $ in $ C([0, \infty);D(-\mathscr{L}_{(k)})) $.
 In particular, the following relations hold.
 \begin{align}
 \label{p_estimation_L2}
 &\| p(t, \cdot)\|_{L^2(B)}\leq \|f\|_{L^2(B)}, t \geq 0.\\
 \label{p_estimation_DL}
 &\| p(t, \cdot)\|_{D(-\mathscr{L}_{(k)}))}\leq \|f\|_{D(-\mathscr{L}_{(k)})}, t \geq 0.\\
 \label{Dg_p_estimation}
 &\| D_{(g)}p(t, \cdot)\|_{D(-\mathscr{L}_{(k)}))}\leq \|f\|_{D(-\mathscr{L}_{(k)})}, t > 0.\\
\label{p_derivative_estimation_L2}
&\bigg\|\frac{\partial^m p(t,\cdot)}{\partial t^m}\bigg\|_{L^2(B)}\leq \bigg(\frac{m}{et}\bigg)^m\|f\|_{L^2(B)}, m \in \mathbb{N}, t>0 .\\
\label{p_derivative_estimation_DL}
&\bigg\|\frac{\partial^m p(t,\cdot)}{\partial t^m}\bigg\|_{D(-\mathscr{L}_{(k)})}\leq \bigg(\frac{m}{et}\bigg)^m\|f\|_{D(-\mathscr{L}_{(k)})}, m \in \mathbb{N}, t>0 .\\
 \label{p_zero_estimation_DL}
 &\lim_{t \to 0} \|p(t,\cdot)-f\|_{D(-\mathscr{L}_{(k)})}=0.\\
 \label{p_infinity_estimation_DL}
&\lim_{t \to \infty}\|p(t,\cdot)\|_{D(-\mathscr{L}_{(k)})}=0.
 \end{align}
\end{theorem}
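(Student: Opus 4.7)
The plan is to use the spectral (eigenfunction expansion) method already foreshadowed in the paragraph preceding the theorem. First I would write the candidate solution as
$$p(t,x) = \sum_{j=1}^\infty \omega_j(t)\,\psi_j(x),$$
where $\{(\lambda_j,\psi_j)\}$ is the eigensystem of the Friedrichs extension $-\mathscr{L}_{(k)}$ furnished by properties (P1)--(P4). Substituting this Ansatz into (\ref{governing_equation_bounded}) and testing against $\psi_j$ reduces the PDE to the scalar problems $\mathscr{D}_{(g)}\omega_j(t)=-\lambda_j \omega_j(t)$ with $\omega_j(0)=\langle\psi_j,f\rangle_{L^2(B)}=:f_j$. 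By Lemma \ref{property_of_general_Caputo}, each of these has a unique solution $\omega_j(t)=f_j Z_{\lambda_j}(t)$, where $Z_{\lambda_j}$ is completely monotone on $(0,\infty)$ with $Z_{\lambda_j}(0)=1$, so that $0\leq Z_{\lambda_j}(t)\leq 1$ for all $t\geq 0$.

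Given this representation, the algebraic estimates (\ref{p_estimation_L2}) and (\ref{p_estimation_DL}) drop out of Parseval's identity combined with the bound $Z_{\lambda_j}\leq 1$:
$$\|p(t,\cdot)\|_{L^2(B)}^2=\sum_j Z_{\lambda_j}^2(t) f_j^2 \leq \sum_j f_j^2 = \|f\|_{L^2(B)}^2,$$
and analogously with the spectral weight $\lambda_j^2$. The same Parseval computation, together with $\mathscr{D}_{(g)}\omega_j=-\lambda_j\omega_j$, gives (\ref{Dg_p_estimation}) (reading the left-hand norm in the sense consistent with (\ref{Dg})). Continuity of the map $t\mapsto p(t,\cdot)$ into $D(-\mathscr{L}_{(k)})$ follows from a Weierstrass $M$-test: the partial sums are dominated by the square-summable tail of $\sum_j \lambda_j^2 f_j^2$, with $Z_{\lambda_j}(\cdot)$ continuous by Lemma \ref{property_of_general_Caputo}.

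For the pointwise-in-$t$ derivative bounds (\ref{p_derivative_estimation_L2})--(\ref{p_derivative_estimation_DL}), I would apply part (a) of Lemma \ref{property of CMF} to the completely monotone function $Z_{\lambda_j}$ (with $Z_{\lambda_j}(0+)=1$) to get $|Z_{\lambda_j}^{(m)}(t)|\leq (m/et)^m$, so that $|\omega_j^{(m)}(t)|\leq (m/et)^m |f_j|$; the estimates again follow by Parseval, while term-by-term differentiation of the series is legitimate on any $[\delta,\infty)$ because the derivative series is uniformly dominated there. The limit (\ref{p_zero_estimation_DL}) follows from $Z_{\lambda_j}(t)\to 1$ and Lebesgue's dominated convergence theorem applied to $\sum_j \lambda_j^2 f_j^2 (1-Z_{\lambda_j}(t))^2$ against the majorant $4\lambda_j^2 f_j^2$.

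The limit (\ref{p_infinity_estimation_DL}) is the most delicate part. Here I would reuse the Young-type argument already employed in Section \ref{Section 3}: from $\mathscr{D}_{(g)}Z_{\lambda_j}=-\lambda_j Z_{\lambda_j}$ and integration against $g$, one gets $\lambda_j t\, Z_{\lambda_j}(t)\leq \|g\|_{L^1(0,t)}\bigl(1-Z_{\lambda_j}(t)\bigr)$, hence
$$Z_{\lambda_j}(t)\leq \frac{1}{1+t\lambda_j/\|g\|_{L^1(0,t)}}.$$
Part (b) of Lemma \ref{property of CMF} (applicable because $g$ is completely monotone with $g(\infty)=0$, an easy consequence of (C4)) gives $\|g\|_{L^1(0,t)}/t\to 0$, so $Z_{\lambda_j}(t)\to 0$ for every $\lambda_j>0$, and dominated convergence finishes the proof. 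Uniqueness then follows by applying Parseval to the difference of any two solutions, which must have $f_j=0$ for every $j$. The main technical obstacle, in my view, is making the formal step $\mathscr{D}_{(g)}p=-\mathscr{L}_{(k)}p$ rigorous as an identity in $L^2(B)$, which requires interchanging $\mathscr{D}_{(g)}$ with the infinite sum; the derivative bound of Lemma \ref{property of CMF}(a) together with the closedness of the Friedrichs extension should justify this interchange.
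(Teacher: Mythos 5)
Your proposal is correct and follows essentially the same route as the paper: eigenfunction expansion via the Friedrichs extension's spectral system (P1)--(P4), reduction to the scalar relaxation equations solved by Lemma \ref{property_of_general_Caputo}, Parseval for the norm bounds, Lemma \ref{property of CMF}(a) for the time-derivative estimates, dominated convergence for the $t\to 0$ limit, and the Young's-inequality bound $Z_{\lambda_j}(t)\leq (1+t\lambda_j/\|g\|_{L^1(0,t)})^{-1}$ combined with Lemma \ref{property of CMF}(b) for the decay as $t\to\infty$. Your explicit factorization $\omega_j=f_jZ_{\lambda_j}$ and the closing remarks on uniqueness and term-by-term differentiation are slightly more careful than the paper's write-up, but the argument is the same.
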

\begin{proof}
By Lemma \ref{property_of_general_Caputo}, the function $ |\omega_j(t)|$ is completely monotone on $ (0, \infty) $.  Therefore $ |\omega_j(t)|$ is monotone decreasing on $ [0, \infty) $.
For any $ t \geq 0 $, we have
\begin{align}
\nonumber
&\| p(t, \cdot)\|_{L^2(B)}=\bigg(\sum_{j=1}^\infty | \omega_j(t)|^2\bigg)^{1/2}
\leq \bigg(\sum_{j=1}^\infty |\langle \psi_j, f \rangle_{L^2(B)}|^2\bigg)^{1/2}
=\|f\|_{L^2(B)},\\
\nonumber
&\| p(t, \cdot)\|_{D(-\mathscr{L}_{(k)})}=\bigg(\sum_{j=1}^\infty |\lambda_j \omega_j(t)|^2\bigg)^{1/2}
\leq \bigg(\sum_{j=1}^\infty |\lambda_j\langle \psi_j, f \rangle_{L^2(B)}|^2\bigg)^{1/2}=\|f\|_{D(-\mathscr{L}_{(k)})}.
\end{align}
For $ t>0 $, we estimate
\begin{equation}
\nonumber
\| D_{(g)}p(t, \cdot)\|_{L^2(B)}=\bigg(\sum_{j=1}^\infty | D_{(g)}\omega_j(t)|^2\bigg)^{1/2}
=\bigg(\sum_{j=1}^\infty | \lambda_j\omega_j(t)|^2\bigg)^{1/2}\leq \|f\|_{D(-\mathscr{L}_{(k)})}.
\end{equation}
By (a) of Lemma \ref{property of CMF}, for $ m \in \mathbb{N}, t>0,$ we deduce
\begin{align}
\nonumber
&\bigg\|\frac{\partial^m p(t,\cdot)}{\partial t^m}\bigg\|_{L^2(B)}=\bigg(\sum_{j=1}^\infty\bigg|\frac{\partial^m \omega_j(t)}{\partial t^m}\bigg|^2\bigg)^{1/2}\leq \bigg(\sum_{j=1}^\infty \bigg(\frac{m}{et}\bigg)^{2m}| \omega_j(t)|^2\bigg)^{1/2}\\
\nonumber
&\leq \bigg(\frac{m}{et}\bigg)^m \|f\|_{L^2(B)}.
\end{align}
Similarly, we can easily prove the inequality (\ref{p_derivative_estimation_DL}).
In order to find the asymptotic behavior near $ t=0 $, for $ t>0 $, we have
\begin{align}
\nonumber
&\|p(t,\cdot)-f\|_{D(-\mathscr{L}_{(k)})}=
\bigg(\sum_{j=1}^\infty |\lambda_j (\omega_j(t)-\langle \psi_j, f\rangle_{L^2(B)})|^2\bigg)^{1/2}\\
\nonumber
&\leq \bigg(\sum_{j=1}^\infty |\lambda_j (2\langle \psi_j, f\rangle_{L^2(B)})|^2\bigg)^{1/2}=2\|f\|_{D(-\mathscr{L}_{(k)})}.
\end{align}
Then by Lebesgue's dominated convergence theorem, we estimate
\begin{equation}
\nonumber
\lim_{t \to 0} \|p(t,\cdot)-f\|_{D(-\mathscr{L}_{(k)})}=
\bigg(\sum_{j=1}^\infty \lim_{t \to 0} |\lambda_j (\omega_j(t)-\langle \psi_j, f\rangle_{L^2(B)})|^2\bigg)^{1/2}=0.
\end{equation}
By Young's inequality, we deduce
\begin{align}
\nonumber
&\lambda_j t\omega_j(t) \leq \|\lambda_j \omega_j \|_{L^1(0,t)}=\|\mathscr{D}_{(g)} \omega_j \|_{L^1(0,t)}
\leq \|g\|_{L^1(0,t)}\bigg\|\frac{ d\omega_j(t)}{dt} \bigg\|_{L^1(0,t)}\\
\nonumber
&= \|g\|_{L^1(0,t)}(1-\omega_j (t)).
\end{align}
Then
\begin{equation}
\nonumber
\omega_j (t)\leq \frac{1}{1+\frac{t\lambda_j}{\|g\|_{L^1(0,t)}}}.
\end{equation}
It follows from (b) of Lemma \ref{property of CMF} and Lebesgue's dominated convergence theorem that 
\begin{equation}
\nonumber
\lim_{t \to \infty}\|p(t,\cdot)\|_{D(-\mathscr{L}_{(k)})}=\lim_{t \to \infty}\bigg(\sum_{j=1}^\infty |\lambda_j \omega_j(t)|^2\bigg)^{1/2}=\bigg(\sum_{j=1}^\infty |\lambda_j \lim_{t \to \infty} \omega_j(t)|^2\bigg)^{1/2}=0.
\end{equation}
\end{proof}

\section{Conclusion}
In the paper, we used the CTRW theory to derive the nonlocal diffusion equation with the general Caputo derivative and the general  Laplacian.
The general nonlocal diffusion equation can describe the L\'{e}vy process whose MSD is finite.

By employing the Fourier analysis technique, we investigate the exstence, nonnegativity and boundedness of solutions of the Cauchy problem for the general nonlocal diffusion equation. In particular, for the special cases of the general diffusion equation, the analytical solutions were expressed in terms of the Mittag-Leffler type functions. 

In addition, the existence result for the IBVP of the nonlocal diffusion equation was obtained.

In the future work, we will consider the existence for a more general class of functions and the optimal estimation for the large time behaviour of solutions.

\section*{Acknowledgements}
The authors would like to thank the referees for their valuable advices for the improvement of this paper.


\begin{thebibliography}{00}

\bibitem{Barkai}
E. Barkai, Y. Garini, R. Metzler,
Strange kinetics of single molecules in living cells,
Phys. Today 65 (2012) 29-54.

\bibitem{Hofling}
F. H\"{o}fling, T. Franosch.
Anomalous transport in the crowded world of biological cells,
Rep. Prog. Phys. 76 (2013) 046602.

\bibitem{Sokolov}
I.M. Sokolov,
Models of anomalous diffusion in crowded environments, 
Soft Matter 8 (2012) 9043-9052.

\bibitem{Scher}
H. Scher, E. W. Montroll. 
Anomalous transit-time dispersion in amorphous solids,
Phys. Rev. B 12 (1975) 2455.

\bibitem{Benson}
D. A. Benson, R. Schumer, M. M. Meerschaert, S. W. Wheatcraft.
Fractional dispersion, L\'{e}vy motion, and the MADE tracer tests,
Transp. Porous  Media 42 (2001) 211-240.

\bibitem{Norregaard}
K. N\symbol{28}rregaard, R. Metzler, C. M. Ritter, K. Berg-S\symbol{28}rensen, L. B. Oddershede, 
Manipulation and motion of organelles and single molecules in living cells,
Chem. Rev. 117 (2017) 4342-4375.


\bibitem{Eggeling}
C. Eggeling, C. Ringemann, R. Medda, G. Schwarzmann, K. Sandhoff, S. Polyakova, V. N. Belov, B. Hein, 
C. von Middendorff, and A. Sch\"{o}nle,
Direct observation of the nanoscale dynamics of membrane lipids in a living cell,
Nature, 457 (2009) 1159-1162.

\bibitem{Manzo}
C. Manzo, J. A. Torreno-Pina, P. Massignan, G. J. Lapeyre, Jr., M. Lewenstein, M. F. Garcia Parajo,
Weak ergodicity breaking of receptor motion in living cells stemming from random diffusivity.
Phys. Rev. X  5 (2015) 011021.

\bibitem{Honigmann}
A. Honigmann, V. Mueller, H. Ta, A. Schoenle, E. Sezgin, S.W. Hell, C. Eggeling,
Scanning STED-FCS reveals spatiotemporal heterogeneity of lipid interaction in the 
plasma membrane of living cells,
Nat. Commun. 5 (2014) 5412.


\bibitem{Alexander}
A. H. O. Wada, T. Vojta,
Fractional Brownian motion with a reflecting wall,
Phys. Rev. E 97 (2018) 020102(R).

\bibitem{Montroll}
E. W.  Montroll, G. H. Weiss, 
Random walks on lattices II,
J. Math. Phys. 6 (1965) 167-181.

\bibitem{Metzler}
R. Metzler, J. Klafter,
The random walks guide to anomalous diffusion: a fractional dynamics approach,
Phys. Rep. 339 (2000) 1-77.

\bibitem{Lomholt}
M. A. Lomholt, L. Lizana, R. Metzler, T. Ambjornsson,
Microscopic origin of the logarithmic time evolution of aging processes in complex systems,
Phys. Rev. Lett. 110 (2013) 208301.

\bibitem{Sanders}
L. P. Sanders, M.A. Lomholt, L. Lizana, K. Fogelmark, R. Metzler, T. Ambjornsson,
Severe slowing-down and universality of the dynamics in disordered interacting many-body systems: Ageing and ultraslow diffusion,
New J. Phys. 16 (2014) 113050.

\bibitem{Chechkin-Sokolov}
 A. Chechkin, I. M. Sokolov, J. Klafter,
Natural and modified forms of distributed-order fractional diffusion equations,
in: J. Klafter, S. C. Lim, R. Metzler, Fractional dynamics: recent advances,
World Scientific, Singapore, 2014.

\bibitem{Kochubei}
A.N. Kochubei,
General fractional calculus, evolution equations, and renewal processes,
Integr. Equa. Operator Theory 71 (2011) 583-600.

\bibitem{Sandev_FCAA}
T. Sandev, R. Metzler, A. Chechkin,
From continuous time random walks to the generalized diffusion equation,
Fract. Calc. Appl. Anal., 21 (2018) 10-28.

\bibitem{Sandev_Chaos}
T. Sandev, I. M. Sokolov, R. Metzler, A. Chechkin,
Beyond monofractional kinetics,
Chaos, Solitons and Fractals 102 (2017) 210??17.

\bibitem{Servadei_Mountain}
R. Servadei, E. Valdinoci,
Mountain pass solutions for non-local elliptic operators,
J. Math. Anal. Appl. 389 (2012) 887-898.

\bibitem{Bisci}
G. M. Bisci, V. D. Radulescu, R. Servadei,
Variational methods for nonlocal fractional problems,
Cambridge University Press, 2016.

\bibitem{del-Castillo}
D. del-Castillo-Negrete,
Anomalous transport in the presence of truncated L\'{e}vy flights,
in: J. Klafter, S. C. Lim, R. Metzler, Fractional dynamics: recent advances,
World Scientific, Singapore, 2014.

\bibitem{Eidelman}
S. D. Eidelman, A.N. Kochubei,
Cauchy problem for fractional diffusion equations,
J. Differential Equations 199 (2004) 211-255.

\bibitem{Luchko_2009}
Y. Luchko,
Maximum principle for the generalized time-fractional diffusion equation,
J. Math. Anal. Appl.351 (2009) 218-223.

\bibitem{Sakamoto}
K. Sakamoto, M. Yamamoto,
Initial value/boundary value problems for fractional diffusion-wave equations and applications to some inverse problems,
J. Math. Anal. Appl.382 (2011) 426--447.

\bibitem{GorLuc}
R. Gorenflo, Y. Luchko, M. Yamamoto,
Time fractional diffusion equation in the fractional Sobolev spaces,
Fract. Calc. Appl. Anal.18 (2015) 799-820.

\bibitem{Luchko_multi_term}
Y. Luchko,
Initial-boundary-value problems for the generalized multi-term time-fractional diffusion equation,
J. Math. Anal. Appl.374 (2011) 538-548.

\bibitem{Li_Liu_Yamamoto}
Z. Li, Y. Liu, M. Yamamoto,
Initial-boundary value problems for multi-term time-fractional diffusion equations with
positive constant coefficients,
Appl. Math. Comput. 257 (2015) 381-397.


\bibitem{Li_2017}
Z. Li, Y. Liu, M. Yamamoto,
Analyticity of solutions to a distributed order time-fractional diffusion equation and its application to an inverse problem,
Comput. Math. Appl. 73 (2017) 1041-1052.


\bibitem{Rundell_Zhang}
W. Rundell, Z. Zhang,
Fractional diffusion: Recovering the distributed fractional derivative from overposed data,
Inverse Problems 33 (2017) 035008.

\bibitem{Mainardi}
F. Mainardi, Y. Luchko, G. Pagnini,
The fundamental solution of the space-time fractional diffusion equation,
Fract. Calc. Appl. Anal. 4 (2001) 153-192.

\bibitem{Luchko-Yamamoto}
Y. Luchko, M. Yamamoto,
General time-fractional diffusion equation: some uniqueness and existence results for the initial-boundary-value problems,
Fract. Calc. Appl. Anal.19 (2016) 676-695.


\bibitem{DieBoo}
K. Diethelm,
The analysis of fractional differential equations,
Springer, Berlin, 2010.


\bibitem{Schilling}
R. L. Schilling, R. Song, Z. Vondra\u{c}ek,
Bernstein Functions. Theory and Applications, 
De Gruyter, Berlin, 2012.

\bibitem{Sin}
C. Sin,
Well-posedness of general Captuo-type fractional differential equations,
Fract. Calc. Appl. Anal.21 (2018)  819-832.


\bibitem{Landko}
N. S. Landkof,
Foundations of modern potential theory,
Springer, New York, 1972.


\bibitem{Feller}
W. Feller, 
An introduction to probability theory and its applications,
Vol. II. Wiley, New York, 1968.

\bibitem{Pinsky}
M. A. Pinsky,
Introduction to Fourier analysis and wavelets,
Brooks/Cole, 2002.

\bibitem{Nezza}
E. Di Nezza, G. Palatucci, E. Valdinoci,
Hitchhiker's guide to the fractional Sobolev spaces,
Bull. Sci. Math. 136 (2012) 521-573.

\bibitem{Pruss}
J. Pr\"{u}ss,
Evolutionary integral equations and applications,
Birkh\"{a}user Verlag, Basel, 1993.


\bibitem{Luchko_Theorem}
Y. Luchko, R. Gorenflo,
An operational method for solving fractional differential equations with the Caputo derivatives,
Acta Math. Vietnam. 24 (1999) 207-233.


\bibitem{Podlubny}
I. Podlubny,
Fractional differential equations,
Academic Press, New York, 1999.

\bibitem{Zeidler}
E. Zeidler,
Applied functional analysis: applications to mathematical physics,
Springer, New York, 1995.



\end{thebibliography}
\end{document}